\crefname{equation}{}{}
\DeclareSymbolFont{symbolsC}{U}{pxsyc}{m}{n}
\DeclareMathSymbol{\medcircle}{\mathbin}{symbolsC}{7}
\crefname{algocf}{Algorithm}{Algorithms}
\crefname{equation}{}{} 
\colorlet{refkey}{orange!20}
\colorlet{labelkey}{blue!30}
\crefname{algocf}{Algorithm}{Algorithms}
\numberwithin{equation}{section}
\newtheorem{theorem}{Theorem}[section]
\newtheorem{proposition}[theorem]{Proposition}
\newtheorem{lemma}[theorem]{Lemma}
\crefname{claim}{Claim}{Claims}
\newtheorem*{question*}{Question}
\theoremstyle{definition}
\newtheorem{definition}[theorem]{Definition}
\newtheorem*{definition*}{Definition}
\theoremstyle{remark}
\newcommand{\eps}{\varepsilon}
\renewcommand{\Pr}{\mathbb{P}}
\newcommand{\mbf}{\boldsymbol}
\renewcommand{\P}{\mathbb{P}}
\newcommand{\bs}{\boldsymbol}
\newcommand{\mb}{\mathbb}
\newcommand{\mr}{\mathrm}
\newcommand{\on}{\operatorname}
\crefname{algocf}{Algorithm}{Algorithms}
\crefname{equation}{}{} 
\title{Sparse recovery properties of discrete random matrices}
\author[A1]{Asaf Ferber}
\address{Department of Mathematics, University of California, Irvine.}
\email{asaff@uci.edu}
\author[A2]{Ashwin Sah}
\author[A3]{Mehtaab Sawhney}
\address{Department of Mathematics, Massachusetts Institute of Technology, Cambridge, MA 02139, USA}
\email{\{asah,msawhney\}@mit.edu}
\author[A4]{Yizhe Zhu}
\address{Department of Mathematics, University of California, Irvine.}
\email{yizhe.zhu@uci.edu}
\thanks{Ferber was supported by NSF grants DMS-1954395 and DMS-1953799, NSF Career DMS-2146406, and Sloan's fellowship. Sah and Sawhney were supported by NSF Graduate Research Fellowship Program DGE-1745302. Sah was supported by the PD Soros Fellowship. Zhu was  supported by NSF-Simons Research Collaborations on the Mathematical and Scientific
Foundations of Deep Learning.}
\DeclareMathOperator{\supp}{supp}
\DeclareMathOperator{\Bad}{\boldsymbol{B}}
\begin{document}

\maketitle

\begin{abstract}
Motivated by problems from compressed sensing, we determine the threshold behavior of a random $n\times d$ $\pm 1$ matrix $M_{n,d}$ with respect to the property ``every $s$ columns are linearly independent''. In particular, we show that for every $0<\delta <1$ and $s=(1-\delta)n$, if $d\leq n^{1+1/2(1-\delta)-o(1)}$ then with high probability every $s$ columns of $M_{n,d}$ are linearly independent, and if $d\geq n^{1+1/2(1-\delta)+o(1)}$ then with high probability there are some $s$ linearly dependent columns.  
\end{abstract}

\section{Introduction}
 
Compressed sensing is a modern technique of data acquisition,  which is at the intersection of mathematics, electrical engineering, computer science, and physics, and has grown 
 tremendously in recent years. Mathematically, we define an unknown signal as a vector $\mbf{x}\in \mathbb R^d$, and we have access to \emph{linear measurements}: that is, for any vector $\mbf{a}\in \mathbb{R}^d$, we have access to $\mbf{a}\cdot \mbf{x}=\sum_{i=1}^da_ix_i$. In particular, if $\mbf{a}^{(1)},\ldots\mbf{a}^{(n)}\in \mathbb{R}^d$ are the measurements we make, then we have an access to the vector $\mbf{b}:=A\mbf{x}$, where 
\[
A:=\begin{pmatrix} -& \mbf{a}^{(1)} &-\\
& \vdots &\\
- &\mbf{a}^{(n)} &-\\
\end{pmatrix}.
\]
The tasks of compressed sensing are: $(i)$ to recover $\mbf{x}$ from $A$ and $\mbf{b}$ as accurately as possible, and $(ii)$ doing so in an efficient way. In practice, one would like to recover a high dimensional signal (that is, $d$ is large) from as few measurements as possible (that is, $n$ is small). In this regime, for an arbitrary vector $x\in \mathbb R^d$ the problem is ill-posed: for any given $\mbf{b}$, the solution of $\mbf{b}=A\mbf{x}$, if it exists, forms a (translation of) linear subspace of dimension at least $d-n$, and therefore there is no way to uniquely recover the original $\mbf{x}$.

A key quantity to look at to guarantee the success of (unique) recovery is the \emph{sparsity} of the vector $\mbf{x}$, and we say that a vector is $s$-\emph{sparse} if its \emph{support} is of size at most $s$. That is, if
\[|\mathrm{supp}(\mbf{x})|=\{i:x_i\not=0 \}\leq s.\]
A neat observation is that having at most one $s$-sparse solution to $A\mbf{x}=\mbf{b}$ for every $\mbf{b}$ is equivalent to saying that $A$ is $2s$-\emph{robust} (that is, every $2s$ columns of $A$ are linearly independent). Indeed, if we have two $s$-sparse vectors $\mbf{x}\neq \mbf{y}$ such that $A\mbf{x}=A\mbf{y}$ then $\mbf{x}-\mbf{y}$ is a nonzero $2s$-sparse vector in the kernel of $A$. For the other direction, if there is a nonzero $2s$-sparse vector in the kernel of $A$, one can split its support into two disjoint sets of size at most $s$ each and consider the vectors restricted to these sets, one of which is multiplied by $-1$. 

If we take $A$ to be a random Gaussian matrix $A$ (or any other matrix drawn from some ``nice'' continuous distribution), then we clearly have that with probability one $A$ is $s$-robust for $n=s$ and any $d\in \mathbb{N}$ (and in particular, one can uniquely recover $s/2$-sparse vectors). Moreover, in their seminal work, Candes and Tao \cite{CT06} showed that it is possible to efficiently reconstruct $\mbf{x}$ with very high accuracy by solving a simple linear program if we take $n=O(s\log(d/s))$.

In this paper, we are interested in the compressed sensing problem with integer-valued measurement matrices and with entries of magnitude at most $k$. Integer-valued measurement matrices have found applications in measuring gene regulatory expressions, wireless
communications,  and natural images \cite{zhang2019analysis,RHE14,he2010simplest}, and they are quick to generate and easy to store in practice \cite{Iwe14,XLJZ15}.
Under this setting, for integer-valued signal $\mbf{x}$, we can have exact recovery even if we allow some noise $\mbf{e}$ with $\|\mbf{e}\|_{\infty}<1/2$ (for more details, see \cite{FNS19}).

The first step is to understand when the compressed sensing problem is well-posed for given $s,n, k$, and $d$. Namely, for which values of $s,n,k$ and $d$ does an $s$-robust $n\times d$ integer-valued matrix with entries in $\{-k,\dots, k\}$ exist? For $s=n$, observe that if $d\geq(2k+1)^2n$, then by the pigeonhole principle, one can find $n$ columns for which their first two rows are proportional and therefore are not linearly independent. In particular, we have $d=O_k(n)$. In \cite{FNS19}, Fukshansky, Needell, and Sudakov showed that there exists an $s$-robust $A$ with $d=\Omega(\sqrt{k} n)$,  using the result of Bourgain, Vu and Wood \cite{BTV10} on the singularity of discrete random  matrices (in fact, the more recent result by Tikhomirov \cite{Tik18} gives a better bound for $k=1$). Konyagin and Sudakov \cite{KS20} improved the upper bound to $d=O(k\sqrt{\log k}n)$, and they gave a deterministic construction of $A$ when $d\geq \frac{1}{2} k^{n/(n-1)}>n$.

When $1\leq s\leq n-1$ and $k=2$, Fukshansky and Hsu \cite{fukshansky2019covering} gave a deterministic construction such that $d\geq \left(\frac{n+2}{2}\right)^{1+\frac{2}{3s-2}}$. When $s=o(\log n)$, this implies we can take $d=\omega(n)$. This result hints that if we allow $s$ to be ``separated away'' from $n$, then one could take $d$ to be ``very large''. A natural and nontrivial step to understanding the $s$-robustness property of matrices is to investigate the \emph{typical} behavior. For convenience, we will focus on the case $k=1$ (even though our argument can be generalized to all fixed $k$), and we define, for all $n,d\in \mathbb{N}$, the random variable $M_{n,d}$ which corresponds to an $n\times d$ matrix with independent entries chosen uniformly from $\{\pm 1\}$. For $1\leq s\leq n$, we would like to investigate the \emph{threshold} behavior of $M:=M_{n,d}$ with respect to being $s$-robust. That is, we wish to find some $d^*:=d(s,n)$ such that 
\[
\lim_{n\to\infty}\Pr[M \textrm{ is }s\textrm{-robust}]= \begin{cases} 0 & d/d^*\rightarrow \infty \\
1 & d/d^*\rightarrow 0.
\end{cases}
\]

It is trivial to show (deterministically) that if $s=n$ and $M$ is $s$-robust, then $d\leq 2n$. What if we allow $s$ to be ``separated away'' from $n$? That is, what if $s=(1-\delta)n$ for some $0<\delta<1$? It is not hard to show (and it follows from the proof of \cref{lem:large-rank}) that the probability for a random $n\times n$ matrix to have rank at least $(1-\delta)n$ is at least $1-2^{-\Omega(\delta^2n^2)}$. Therefore, one could think that a typical $M_{n,d}$ might be $(1-\delta)n$-robust for some $d=2^{n^{1-o(1)}}$. This turns out to be wrong as we show in the following simple theorem: 
\begin{theorem} \label{thm: det upper bound}
For any fixed $0<\delta<1$ there exists $C>0$ such that for sufficiently large $n\in \mathbb{N}$ the following holds. If $s= (1-\delta)n$ and $d\ge Cn^{1+1/(1-\delta)}$, then every $\pm 1$ $n\times d$ matrix $M$ is not $s$-robust.
\end{theorem}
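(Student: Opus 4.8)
\emph{Reduction.} A $\pm1$ matrix $M$ fails to be $s$-robust precisely when some set of $s$ columns is linearly dependent, and adding further columns to a linearly dependent set keeps it dependent. Since $d\ge Cn^{1+1/(1-\delta)}>n>s$, it therefore suffices to produce \emph{any} nonempty linearly dependent set of at most $s$ columns and then enlarge it arbitrarily to size $s$; equivalently, to produce a nonzero $\mu\in\R^d$ with $M\mu=0$ and $|\supp(\mu)|\le s$. (Note that such a sparse kernel vector does not exist for a generic $n\times d$ real matrix --- the sparsest kernel vector of a generic matrix has support $n+1>s$ --- so the argument must use that the entries of $M$ lie in $\{-1,1\}$.)

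\emph{The engine: pigeonhole on subset sums.} Fix a parameter $a$ with $2a\le s$. Each of the $\binom da$ size-$a$ subsets $A\subseteq[d]$ of columns yields a sum $\sum_{i\in A}v_i\in\R^n$ whose $\ell$-th entry, being a sum of $a$ terms $\pm1$, lies in the $(a+1)$-element set $\{-a,-a+2,\dots,a-2,a\}$; hence there are at most $(a+1)^n$ distinct such sums. Once $\binom da>(a+1)^n$, two distinct size-$a$ subsets $A\ne B$ must have equal sum, and then
\[
\sum_{i\in A\setminus B}v_i-\sum_{i\in B\setminus A}v_i=0
\]
is a nonzero $\pm1$-dependence among the columns indexed by $A\triangle B$, a set of size $|A\triangle B|\le 2a\le s$. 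So the task reduces to certifying $\binom da>(a+1)^n$ for an admissible $a$ once $d$ is polynomially large. Taking $a=\lfloor s/2\rfloor=\lfloor(1-\delta)n/2\rfloor$ and using $\binom da\ge(d/a)^a$, the required inequality holds as soon as $d>a(a+1)^{n/a}$; since $a=\Theta(n)$ and $n/a=\tfrac{2}{1-\delta}(1+o(1))$, this is implied by $d\ge n^{1+2/(1-\delta)+o(1)}$, which already gives a polynomial threshold.

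\emph{Reaching the exponent $1+\tfrac1{1-\delta}$, and the main obstacle.} The bound just obtained loses a factor of two in the exponent: the estimate $|A\triangle B|\le 2a$ forces $a\approx s/2$, so $(a+1)^{n/a}$ carries the exponent $\tfrac{2n}{s}=\tfrac{2}{1-\delta}$ rather than the desired $\tfrac ns=\tfrac1{1-\delta}$. To match the stated bound one wants the two colliding subsets to share all but $\le s/2$ of their elements --- so that $|A\triangle B|\le s$ even when $a\approx s$ --- or, equivalently, to run the subset-sum pigeonhole only after collapsing the $n$ coordinates onto an image small enough that any collision there is already a \emph{global} collision. This is exactly the crux: the naive collapse, namely pigeonholing on a block of $\Theta(n)$ rows to force exact agreement, costs $2^{\Theta(n)}$ columns, so the amount of coordinate-compression has to be traded off carefully against the subset-sum count. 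Carrying out this optimization --- guaranteeing a dependence supported on at most $(1-\delta)n$ columns while keeping $d$ at $n^{1+1/(1-\delta)}$ --- is the substantive step, whereas the reduction, the pigeonhole itself, and the bookkeeping of exponents are routine.
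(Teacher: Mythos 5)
There is a genuine gap: your argument, as written, only proves the theorem with the weaker exponent $1+2/(1-\delta)$, and you explicitly defer the ``substantive step'' of reaching $1+1/(1-\delta)$ without supplying it. Moreover, the direction you gesture at for closing the gap (forcing the two colliding subsets to overlap, or pigeonholing a block of rows into exact agreement) is not how the loss of the factor $2$ is actually removed, and it is not clear it can be made to work: forcing exact agreement on $\Theta(n)$ rows indeed costs exponentially many columns, as you note.

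The missing idea in the paper is to keep $a=s/2$ and the support bound $|\supp|\le|A\cup B|\le s$ exactly as in your scheme, but to replace the unsigned subset sum by a \emph{signed} fingerprint with small discrepancy. By Spencer's ``six standard deviations suffice'' theorem (or a Chernoff bound, at the cost of a $\sqrt{\log n}$ factor), for every $s/2$-subset $A$ of columns one can fix signs $x_i\in\{\pm1\}$, $i\in A$, with $\bigl\lVert\sum_{i\in A}x_i\mbf{v}_i\bigr\rVert_\infty\le C'\sqrt{n}$. These fingerprints are integer vectors in a box with at most $(3C'\sqrt{n})^{n}$ points, whereas your unsigned sums range over roughly $(s/2)^n$ values; shrinking the box side from $\Theta(n)$ to $O(\sqrt n)$ is precisely what halves the exponent. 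Since $\binom{d}{s/2}\ge(d/s)^{s/2}=\bigl(\tfrac{C}{1-\delta}n^{1/(1-\delta)}\bigr)^{(1-\delta)n/2}>(3C'\sqrt n)^n$ for $C$ large, two distinct $s/2$-subsets $A\ne B$ share a fingerprint, and subtracting the two signed combinations gives a nonzero kernel vector supported on $A\cup B$, of size at most $s$ (nonzero because some index lies in $A\setminus B$, even though coefficients may cancel on $A\cap B$). In short: no coordinate collapsing and no overlap engineering is needed; the entire gain comes from choosing the signs so that each fingerprint is $O(\sqrt n)$ in $\ell_\infty$, and your pigeonhole then goes through verbatim with the union, rather than the symmetric difference, controlling the sparsity.
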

\begin{proof}
Given any $s/2$-subset of column vectors $\mbf{v}_1,\ldots,\mbf{v}_{s/2}\in \{\pm1\}^n$ of $M$, by Spencer's ``six standard deviations suffice'' \cite{Spe85}, there exist some $x_1,\ldots,x_{s/2} \in \{\pm 1\}$ for which $\|\sum_{i=1}^{s/2}x_i\mbf{v}_i\|_{\infty}\leq C'\sqrt{n}$ for a universal constant $C'>0$ (a simple Chernoff bound suffices if one is willing to lose a $\sqrt{\log n}$ factor). Fix such a combination $\sum_{i=1}^{s/2}x_i\mbf{v}_i$ for each $s/2$-subset of column vectors. Since there are at most $\left(3C'\sqrt{n}\right)^{n}$ integer-valued vectors in the box $[-C'\sqrt{n},C'\sqrt{n}]^n$, and since 
\[\binom{d}{s/2}\geq \left(\frac{d}{s}\right)^{s/2} = \left(\frac{Cn^{1/(1-\delta)}}{1-\delta}\right)^{(1-\delta)n/2} >\left(3C'\sqrt{n}\right)^{n},\]
by the pigeonhole principle, as long as $C$ is large enough, there are two $s/2$-subsets whose corresponding combination of column vectors are the same. Subtracting the corresponding combination of column vectors leads to a nonzero $s$-sparse kernel vector of $M$ (since the indices of two $s/2$-subsets are not the same), proving the result.
\end{proof}

In our main result, we determine the (typical) asymptotic behavior up to a window of $(\log n)^{\omega(1)}$.

\begin{theorem} \label{thm:random-bound}
For any fixed $0<\delta<1$, let $n\in \mathbb{N}$ be sufficiently large, let $s= (1-\delta)n$, and let $\eps=\omega(\log\log n/\log n)$. We have that: 
\begin{enumerate}
    \item If $d\leq n^{1+1/(2-2\delta)-\eps}$ then with high probability $M_{n,d}$ is $s$-robust.
    \item If $d\geq n^{1+1/(2-2\delta)+\eps}$ then with high probability $M_{n,d}$ is not $s$-robust.
\end{enumerate}
\end{theorem}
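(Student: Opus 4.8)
The two halves of the statement call for rather different arguments, and I would prove them separately.

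\textbf{Part (1): robustness when $d$ is small.} This is a first-moment bound, but one must be careful \emph{what} to union-bound over: the naive union bound over all $\binom{d}{s}$ supports fails, since for a fixed $s$-set $S$ one has $\Pr[\text{the columns }M_S\text{ are linearly dependent}]\ge 2^{-n}$ (e.g.\ two of the columns coincide), whereas $\binom{d}{s}=n^{\Theta(n)}$ is far too large to beat $2^{-\Theta(n)}$. The fix is to union-bound over \emph{circuits} (minimal dependent sets of columns). A matrix is not $s$-robust iff it has a circuit of size $m$ for some $2\le m\le s$, and if a fixed $m$-set (say $[m]$) is a circuit then $\ker M_{[m]}$ is one-dimensional, spanned by a primitive integer vector $\mbf v$ with \emph{every} coordinate nonzero and, by Cramer's rule and Hadamard's bound, $\|\mbf v\|_\infty\le m^{m/2}$. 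Hence, writing $\rho_m(\mbf v)=\Pr_{\bs{\eps}\in\{\pm1\}^m}[\langle\bs{\eps},\mbf v\rangle=0]$ for the Lévy concentration function and $V_m=\{\mbf v\in\Z^m:\ \mbf v\text{ primitive},\ v_i\ne0\text{ for all }i,\ \|\mbf v\|_\infty\le m^{m/2}\}$, and using that the $n$ rows are i.i.d.,
\[
\Pr[M_{n,d}\text{ is not }s\text{-robust}]\ \le\ \sum_{m=2}^{s}\binom{d}{m}\sum_{\mbf v\in V_m}\Pr[M_{[m]}\mbf v=0]\ =\ \sum_{m=2}^{s}\binom{d}{m}\sum_{\mbf v\in V_m}\rho_m(\mbf v)^{\,n}.
\]

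The heart of the matter is the estimate $\sum_{\mbf v\in V_m}\rho_m(\mbf v)^{\,n}\le e^{O(n)}m^{-n/2}$, uniform in $m$. One input is the Erd\H{o}s--Littlewood--Offord inequality, giving $\rho_m(\mbf v)\le O(1/\sqrt m)$ for every $\mbf v\in V_m$; but this is useless by itself once $m=\Omega(\sqrt n)$, since $|V_m|$ can be as large as $m^{\Theta(m^2)}$. The point is that almost all vectors in that box have coordinates that are large or additively ``spread'', and then $\rho_m(\mbf v)$ is far smaller than $1/\sqrt m$ (heuristically $\rho_m(\mbf v)\asymp 1/\|\mbf v\|_2$ when the $v_i$ generate $\Z$), so $\rho_m(\mbf v)^n$ is negligible; quantifying this is a Hal\'asz-type / inverse Littlewood--Offord count of the integer vectors at each concentration level. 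Granting this, one plugs in $\binom{d}{m}\le (ed/m)^m$: for $d\le n^{1+1/(2-2\delta)-\eps}$ the coefficient of $n\log n$ in the log of the $m$-th summand is $\frac{m}{(2-2\delta)n}-\frac12$, which is negative for all $m<s$ (as $\frac{1-\delta}{2-2\delta}=\frac12$), and at $m=s$ it vanishes and the summand equals $n^{-(1-\delta)\eps n(1+o(1))}$, which tends to $0$ precisely because $\eps=\omega(\log\log n/\log n)\gg 1/\log n$; summing over the $O(n)$ values of $m$ costs only an $e^{o(n)}$ factor. (A crude rank bound in the spirit of \cref{lem:large-rank} is convenient for tidying some of the small-$m$ terms.)

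\textbf{Part (2): non-robustness when $d$ is large.} Here I would use the second moment method, phrased as a birthday/collision count mirroring the proof of \cref{thm: det upper bound}. Let $X$ count the ordered pairs $(A,B)$ of disjoint $\lfloor s/2\rfloor$-subsets of $[d]$ with $\sum_{i\in A}\mbf v_i=\sum_{i\in B}\mbf v_i$; every such pair yields the nonzero $s$-sparse kernel vector equal to $+1$ on $A$ and $-1$ on $B$, so it suffices to show $X\ge1$ with high probability. Each coordinate of $\sum_{i\in A}\mbf v_i-\sum_{i\in B}\mbf v_i$ is a sum of $2\lfloor s/2\rfloor$ i.i.d.\ signs, so the per-pair collision probability is $\rho_0^{\,n}$ with $\rho_0=\Theta(s^{-1/2})$, whence $\Ex[X]=\binom{d}{\lfloor s/2\rfloor}\binom{d-\lfloor s/2\rfloor}{\lfloor s/2\rfloor}\rho_0^{\,n}$ and, for $d\ge n^{1+1/(2-2\delta)+\eps}$, $\log\Ex[X]=(1-\delta)\eps\,n\log n+O(n)\to\infty$ (again using the hypothesis on $\eps$). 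It then remains to show $\Ex[X^2]=(1+o(1))\Ex[X]^2$: split the sum over pairs-of-pairs $\big((A,B),(A',B')\big)$ by the size of the overlap $(A\cup B)\cap(A'\cup B')$ and by the rank of the two $\{0,\pm1\}$-valued patterns on that overlap; the ``generic'' terms reassemble $\Ex[X]^2$, and the correlated ones (including $(A',B')\in\{(A,B),(B,A)\}$, which contribute only $O(\Ex[X])$) are shown to be $o(\Ex[X]^2)$. Chebyshev then gives $\Pr[X=0]\le\mathrm{Var}(X)/\Ex[X]^2\to0$.

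\textbf{Main obstacle.} It is the core estimate of part~(1): the weighted count $\sum_{\mbf v\in V_m}\rho_m(\mbf v)^n\le e^{O(n)}m^{-n/2}$, uniform in $m\le s$. The Erd\H{o}s--Littlewood--Offord bound together with a volume count of the Hadamard box is far too lossy once $m=\Omega(\sqrt n)$, so one really needs an inverse Littlewood--Offord / Hal\'asz-type structure theorem ensuring that integer vectors with anomalously large concentration function are correspondingly few (and then to check the leftover budget against the factor $\binom{d}{m}$, which is $n^{\Theta(n)}$). The variance estimate in part~(2) is delicate but of an entirely standard type.
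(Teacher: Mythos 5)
Your reduction of Part (1) to circuits is legitimate (Cramer plus Hadamard does give a primitive integer kernel vector with all coordinates nonzero and bounded by $m^{m/2}$), and your numerology at $m=s$ correctly identifies where the threshold $n^{1+1/(2-2\delta)}$ and the hypothesis $\eps=\omega(\log\log n/\log n)$ come from. But the entire content of the theorem is concentrated in the one display you do not prove, $\sum_{\mbf{v}\in V_m}\rho_m(\mbf{v})^n\le e^{O(n)}m^{-n/2}$, and you flag it as the ``main obstacle'' rather than resolve it. This is not a citation-level gap: the classical inverse Littlewood--Offord theorems (Tao--Vu, Nguyen--Vu) describe vectors of large concentration only up to a set of exceptional coordinates, and inside your Hadamard box each exceptional coordinate ranges over $m^{\Theta(m)}$ values, so a level-by-level count built on those results overshoots the budget $e^{O(n)}$ at the critical level $\rho\asymp m^{-1/2}$ by super-exponential factors. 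The counting-type inverse theorems that repair this loss are formulated over $\mathbb{F}_p$ in terms of $R_k^\ast$ (\cref{thm:counting}), and even there they bound the structured vectors only relative to $p^s$, which is useful only when played against a $p^{-\Theta(n)}$-type probability; this is precisely why the paper moves the whole problem to $\mathbb{F}_p$ with $p\approx e^{\log^3 n}$ and organizes Part (1) differently: split off $n_2=o(n)$ fresh rows, use \cref{lem:large-support}, \cref{thm:halasz-fp} and \cref{thm:counting} to show (\cref{prop:small-atom-probability}) that any $s$-sparse kernel vector of the first block must be unstructured, i.e.\ $\rho_{\mathbb{F}_p}=e^{-\omega(\log^2 n)}$, show separately that every $s$-set of columns already has rank at least $s-t$ (\cref{lem:large-rank}), and then let the fresh rows of $M_2$ boost each such set to full rank, so that a fixed $s$-set fails with probability $e^{-\omega(n\log n)}=o(\binom{d}{s}^{-1})$. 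In particular your opening claim that ``the union bound over supports fails'' is only true for a single-exposure bound; the paper does union-bound over all $\binom{d}{s}$ supports, after conditioning, and that conditional architecture is exactly what substitutes for the integer-vector weighted count you assume. As written, Part (1) of your proposal presupposes the hard step.

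Part (2) is essentially the paper's argument: a second moment computation for maximally structured $\pm1$ kernel vectors (the paper counts $s$-sets of columns summing to zero rather than colliding pairs of $\lfloor s/2\rfloor$-sets, a cosmetic difference), and your expectation computation matches. But the step you call ``delicate but of an entirely standard type'' is where the paper spends all of its Section 4: one needs the refined correlation estimate $\alpha(s,m)=1+O(m^2/n^2)$ for $m\le\sqrt n$ (\cref{lem:rad-sum-correlation-2}), since the cruder $1+O(m/n)$ bound of \cref{lem:rad-sum-correlation-1} only gives $\alpha(s,m)^n=e^{O(m)}$, which is not $1+o(1)$; the intermediate overlaps are beaten by a Chernoff--Hoeffding/KL-divergence bound against $e^{O(m/\eps)}$; and the near-total-overlap regime $m\ge(1-\eps/8)s$ must absorb a trivial $(10\sqrt n)^n$ factor using $\binom{d}{\eps s/8}/\binom{d}{s}$ and the exact size of $d$. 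These are the places where the hypothesis on $\eps$ is actually consumed, so while your outline of Part (2) is compatible with the paper's proof, it too stops short of the computations that make it work.
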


We believe that by optimizing our bounds/similar methods, one would be able to push the bounds in \cref{thm:random-bound} up to a constant factor of $n^{1+1/(2-2\delta)}$ (though we did not focus on this aspect). It would be interesting to obtain the $1+o(1)$ multiplicative threshold behavior.

\section{Proof outline}\label{sec:outline}
We first outline the proof of \cref{thm:random-bound}.  
We will prove part (1) of \cref{thm:random-bound} over $\mathbb{F}_p$ for some prime $p=e^{\omega(\log^2 n)}$ to be chosen later (a stronger statement). Our strategy, at large, is to generate $M$ as 
\[M=\begin{pmatrix} M_1 \\ M_2\end{pmatrix}\]
where $M_1=M_{n_1,d}$ and $M_2=M_{n_2,d}$, with $n_1\approx n$ and $n_2=o(n)$. The proof consists of the following two phases:
\begin{enumerate}
    \item {\bf Phase 1:}  Given any nonzero vector $\mbf{a}\in \mathbb{F}_p^d$, we let 
    \begin{equation}\label{eq:atom-prob}\rho_{\mathbb{F}_p}(\mbf{a})=\max_{x\in \mathbb{F}_p}\P\left[\sum_{i=1}^da_i\xi_i=x\right],\end{equation}
    where the $\xi_i$s are i.i.d.~Rademacher random variables. In this phase, we will show that
    \begin{enumerate}
        \item $M_1$ is with high probability such that for all nonzero $\mbf{a}\in \mathbb{F}_p^d$, if $|\supp{\mbf a}|\leq s:=(1-\delta)n$ and $M_1\mbf{a}=\mbf{0}$, then $\rho_{\mathbb{F}_p}(\mbf{a})=e^{-\omega(\log^2 n)}$, and
        \item $M_1$ is with high probability such that every $s$-subset of its columns has rank $s-o(s)$.
    \end{enumerate}
    \item {\bf Phase 2:} Conditioned on the above properties, we will use the extra randomness of $M_2$ to show that for a specific set of $s$ columns, after exposing $M_2$, the probability that it does not have full rank is $o\left(1/\binom{d}{s}\right)$, and hence a simple union bound will give us the desired result.
\end{enumerate}
In this strategy, it turns out that {\bf Phase 1(a)} is the limiting factor, i.e., ruling out structured kernel vectors.

For the proof of the upper bound in \cref{thm:random-bound}, we exploit this observation. We show using the second-moment method that it is highly likely that some $2\lfloor(1-\delta)n/2\rfloor$ columns sum to the zero vector (corresponding to an all $1$s, highly structured kernel vector).

\section{Proof of the lower bound in \texorpdfstring{\cref{thm:random-bound}}{Theorem 1.2}}
\label{sec:lower-bound}
In this section we prove \cref{thm:random-bound}. Let (say) $p\approx e^{\log^3 n}$ be a prime, let $d=n^{1+1/(2-2\delta)-\eps}$ and $s=(1-\delta)n$ as given, and $n_1=(1-\beta)n$ where $\beta=\omega(1/\log n)$ and $\beta=o(\log\log n/\log n)$. As described in \cref{sec:outline}, our proof consists of two phases, each of which will be handled separately. 

\subsection{Phase 1: no sparse structured vectors in the kernel of \texorpdfstring{$M_1$}{M1}}\label{sec:structured-M1} 
Our first goal is to prove the following proposition. 
\begin{proposition} \label{prop:small-atom-probability}
$M_{n_1,d}$ is with high probability such that for every $(1-\delta)n$-sparse vector $\mbf{a}\in \mathbb{F}_p^d\setminus\{\mbf{0}\}$, if $M_1\mbf{a}=\mbf{0}$ then $\rho_{\mathbb{F}_p}(\mbf{a})=e^{-\omega(\log^2 n)}.$
\end{proposition}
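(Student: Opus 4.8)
The plan is a first-moment argument, crucially exploiting that $d = n^{1+1/(2-2\delta)-\eps}$ lies a factor $n^{\eps}$ below the true threshold. Since ``$\rho_{\mathbb F_p}(\mbf a) = e^{-\omega(\log^2 n)}$'' is, after a routine diagonalization, equivalent to the assertion that for every fixed constant $M$ one has $\rho_{\mathbb F_p}(\mbf a) \le e^{-M\log^2 n}$, it suffices to fix $M$ and show that with high probability there is no nonzero $\mbf a \in \mathbb F_p^d$ with $|\supp\mbf a| \le s$, $M_1\mbf a = \mbf 0$, and $\rho_{\mathbb F_p}(\mbf a) \ge \rho_0 := e^{-M\log^2 n}$. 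A row $\mbf r$ of $M_1$ satisfies $\Pr[\mbf r\cdot\mbf a = 0] \le \rho_{\mathbb F_p}(\mbf a)$, so $\Pr[M_1\mbf a = \mbf 0] \le \rho_{\mathbb F_p}(\mbf a)^{n_1}$, and it remains to prove $\sum_{\mbf a}\rho_{\mathbb F_p}(\mbf a)^{n_1} = o(1)$ over all such $\mbf a$. Using $\rho_{\mathbb F_p}(c\mbf a) = \rho_{\mathbb F_p}(\mbf a)$ for $c \neq 0$, I split this sum by the support size $k = |\supp\mbf a|$ and the dyadic scale $\ell$ with $\rho_{\mathbb F_p}(\mbf a) \in (2^{-\ell-1}, 2^{-\ell}]$; by the Erd\H{o}s--Littlewood--Offord inequality (valid over $\mathbb F_p$ since $p \gg k$, giving $\rho_{\mathbb F_p}(\mbf a) \le \binom{k}{\lfloor k/2\rfloor}2^{-k} = O(k^{-1/2})$) only $\tfrac12\log_2 k - O(1) \le \ell \le \log_2(1/\rho_0) = O(M\log^2 n)$ occur.

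For $k \le n_1/(4\log^3 n)$ a crude count suffices: there are at most $\binom dk(p-1)^k \le e^{2k\log^3 n} \le e^{n_1/2}$ admissible $\mbf a$ of a given support size, and $\rho_{\mathbb F_p}(\mbf a)^{n_1} \le 2^{-n_1}$, so this part sums to $e^{-\Omega(n)}$. For $n_1/(4\log^3 n) < k \le s$ I invoke a quantitative inverse Littlewood--Offord theorem in $\ell^2$-form: for $q \gg 1/p$, any $\mbf b \in (\mathbb F_p\setminus\{0\})^k$ with $\rho_{\mathbb F_p}(\mbf b) \ge q$ becomes, after dilating by a suitable $t \in \mathbb F_p^\times$, an integer vector $\mbf b'$ with entries in $(-p/2,p/2]$ and $\|\mbf b'\|_2^2 \le C\log(1/q)\,q^{-2}$, whence, after summing over the $e^{O(\log^4 n)}$ choices for the relevant dilation/progression data,
\[
\#\big\{\mbf b \in (\mathbb F_p\setminus\{0\})^k : \rho_{\mathbb F_p}(\mbf b)\ge q\big\}
\;\le\; e^{O(\log^4 n)}\cdot\#\big\{\mbf b'\in\mathbb Z^k : \|\mbf b'\|_2^2 \le C\log(1/q)\,q^{-2}\big\}
\;\le\; e^{O(\log^4 n)}\Big(\tfrac{C'\log(1/q)}{q^2 k}\Big)^{k/2},
\]
the last step being the volume bound for a Euclidean ball. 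Taking $q = 2^{-\ell-1}$ and multiplying by $\binom dk$ for the support and by $2^{-\ell n_1}$, the only $\ell$-dependent factor is $2^{\ell(k-n_1)}$, decreasing in $\ell$ since $k < n_1$, and the bound is increasing in $k$ near $k = s$; so the extremal case is $k = s$, $\ell = \tfrac12\log_2 s + O(1)$. Using $\log\binom ds = \tfrac n2\log n - (1-\delta)\eps\,n\log n + O(n)$ and $2^{-\ell n_1}\big|_{\ell=\frac12\log_2 s} = e^{-\frac n2\log n + \frac\beta2 n\log n + O(n)}$, the product there is $\exp\!\big((\tfrac\beta2 - (1-\delta)\eps)\,n\log n + O(n\log\log n)\big)$, the $O(n\log\log n)$ collecting the $(\log(1/q))^{k/2}$ and $e^{O(\log^4 n)}$ losses. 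Since $\eps = \omega(\log\log n/\log n)$ while $\beta = o(\log\log n/\log n) = o(\eps)$, this exponent is $-(1-\delta)\eps\,n\log n + O(n\log\log n) = -\omega(n\log\log n)\to-\infty$, and summing over the $O(n)$ values of $k$ and $O(M\log^2 n)$ values of $\ell$ keeps the total $o(1)$.

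The main obstacle is the inverse Littlewood--Offord input used above. The textbook structural conclusion --- that $\mbf b$ (up to dilation) has all coordinates inside a bounded-rank generalized arithmetic progression of volume $O(1/\rho)$ --- only yields $\approx (1/\rho)^k$ such vectors, which near the Erd\H{o}s-extremal scale $\rho\approx k^{-1/2}$ is $e^{\Theta(k\log k)} = e^{\Theta(n\log n)}$ and overwhelms the $n^{\eps}$ room. One must instead extract the $\ell^2$ bound $\|\mbf b'\|_2^2 \lesssim \log(1/\rho)\,\rho^{-2}$, which couples the coordinates through a ball rather than a box and therefore gives only $e^{O(k\log\log n)}$ vectors at that scale, and one must keep every polylogarithmic overhead below $e^{O(n\log\log n)}$. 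This is precisely where the hypothesis $\eps = \omega(\log\log n/\log n)$ (rather than merely $\eps\to 0$) is needed, and is the limiting factor flagged in the proof outline.
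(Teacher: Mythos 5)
Your outer skeleton (reduce to $\Pr[M_1\mbf a=\mbf 0]\le\rho_{\mathbb F_p}(\mbf a)^{n_1}$, kill supports below $n/\mathrm{polylog}$ by a crude count, then trade $\binom{d}{s}$ against $\rho^{n_1}$ and win because $\beta=o(\eps)$ and $\eps=\omega(\log\log n/\log n)$) is the same as the paper's, and your endpoint arithmetic at $k=s$, $q\asymp s^{-1/2}$ matches the paper's computation. The gap is the tool you use to count vectors with large atom probability. The ``inverse Littlewood--Offord theorem in $\ell^2$-form'' you invoke --- that any $\mbf b\in(\mathbb F_p\setminus\{0\})^k$ with $\rho_{\mathbb F_p}(\mbf b)\ge q\gg 1/p$ admits a single dilation $t$ making the integer representatives satisfy $\|t\mbf b\|_2^2\le C\log(1/q)q^{-2}$ --- is false. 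Take $\mbf b$ with $k-1$ entries equal to $1$ and one entry equal to a generic $N\in\mathbb F_p$: then $\rho_{\mathbb F_p}(\mbf b)\asymp k^{-1/2}$, so the claim would demand some $t$ with $(k-1)t^2+|tN\bmod p|^2\lesssim k\log k$, i.e.\ $|t|\lesssim\sqrt{\log k}$ and $|tN\bmod p|\lesssim\sqrt{k\log k}$; but each admissible pair $(t,\,tN\bmod p)$ determines $N$, so at most $O(\sqrt k\,\log k)$ of the $p=e^{\log^3 n}$ values of $N$ admit such a $t$. The same failure occurs for genuinely rank-$2$ examples (half the entries $1$, half $N$, with $\rho\asymp 1/k$), where no dilation can beat the shortest vector of the lattice $\{(x,y):y\equiv Nx\ (\mathrm{mod}\ p)\}$, of length $\asymp\sqrt p\ggg\mathrm{poly}(k)$. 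These are exactly the ``structured but not one-generator'' vectors that make this proposition the limiting step, so the lemma cannot be waved in.

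Note that even though your \emph{final} cardinality bound is not obviously contradicted by such families (the $k^{\Theta(k)}$ slack can absorb the $p^{O(1)}$ choices of extra generators), you have not proved it, and proving a count of this precision is the actual content here. The paper gets it by a different route: Hal\'asz's inequality over $\mathbb F_p$ (\cref{thm:halasz-fp}), which converts a bound on the additive-coincidence count $R_k^\ast$ into a bound on $\rho_{\mathbb F_p}$, combined with the counting theorem \cref{thm:counting}, whose quantification over all large subvectors $\mbf b\subseteq\mbf a$ is precisely what neutralizes junk coordinates and multi-generator structure (one passes to a good subvector $S$ and uses $\rho_{\mathbb F_p}(\mbf a)\le\rho_{\mathbb F_p}(\mbf a|_S)$), followed by the same dyadic decomposition in the level $t$ that you attempt in $q$. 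To repair your argument you would need to either import that $R_k^\ast$-based machinery (at which point you are reproducing the paper's proof) or cite a genuine $\rho$-based counting inverse theorem proved by Hal\'asz/Fourier methods; the single-dilation Euclidean-ball structure statement is not available.
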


In order to prove the above proposition, we need some auxiliary results. 

\begin{lemma} \label{lem:large-support}
$M_{n_1,d}$ is with high probability $n/\log^4n$-robust over $\mathbb{F}_p$.
\end{lemma}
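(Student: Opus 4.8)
The plan is a direct union bound; the one thing to exploit is that $p$ is only sub-exponentially large, so the number of potential dependencies is tiny compared with a crude anticoncentration estimate. Write $M_1:=M_{n_1,d}$ and $s':=\lceil n/\log^4 n\rceil$. Since any subset of a linearly independent set is linearly independent, it suffices to show that with high probability every set of \emph{exactly} $s'$ columns of $M_1$ is linearly independent over $\mathbb{F}_p$. So I would fix a set $S\subseteq[d]$ with $|S|=s'$, let $M_S$ denote the $n_1\times s'$ submatrix of $M_1$ on the columns indexed by $S$, and bound the probability that $M_S$ fails to have full column rank.

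The core estimate is anticoncentration of a single row. If the columns of $M_S$ are dependent there is a nonzero $\mbf a\in\mathbb{F}_p^{s'}$ with $M_S\mbf a=\mbf 0$. For a fixed nonzero $\mbf a$ and a single row $(\xi_i)_{i\in S}$ of $M_S$ (a vector of i.i.d.\ Rademacher entries), choose a coordinate $j$ with $a_j\neq0$ and condition on all $\xi_i$ with $i\neq j$; then $\sum_i a_i\xi_i$ equals $a_j\xi_j$ plus a constant, and since $p$ is odd the two possible values $a_j$ and $-a_j$ are distinct in $\mathbb{F}_p$, so the row annihilates $\mbf a$ with probability at most $\tfrac12$. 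The $n_1$ rows of $M_S$ are independent, so $\Pr[M_S\mbf a=\mbf 0]\le 2^{-n_1}$ for every fixed nonzero $\mbf a$. Union bounding over the at most $p^{s'}$ choices of $\mbf a$ and the at most $\binom{d}{s'}\le d^{s'}$ choices of $S$ gives
\[
\Pr[M_1\text{ is not }s'\text{-robust over }\mathbb{F}_p]\ \le\ d^{s'}\,p^{s'}\,2^{-n_1}\ =\ \exp\!\big(s'\log d+s'\log p-(1-\beta)(\log 2)n\big).
\]

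Plugging in the parameters finishes it: $d=n^{O_\delta(1)}$ gives $s'\log d=O_\delta\!\big(n/\log^3 n\big)=o(n)$; the choice $p\approx e^{\log^3 n}$ gives $s'\log p=O(n/\log n)=o(n)$; and $\beta=o(1)$ gives $(1-\beta)(\log2)n=(\log2-o(1))n$, so the exponent above is $-\Omega(n)\to-\infty$. The only delicate point is the per-row anticoncentration bound, which genuinely uses $p\neq2$ (so that $\xi_j\mapsto a_j\xi_j$ takes two distinct values); there is no real obstacle beyond that. It is also worth noting that this computation is essentially why the robustness parameter is taken to be $n/\mathrm{polylog}(n)$ here: the factor $p^{s'}=e^{s'\cdot\mathrm{polylog}(n)}$ must stay $e^{o(n)}$, which already fails once $s'$ is as large as $n/\log^3 n$.
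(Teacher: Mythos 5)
Your proof is correct and follows essentially the same route as the paper: the trivial per-vector bound $\Pr[M_1\mbf a=\mbf 0]\le 2^{-n_1}$ (each row kills $\mbf a$ with probability at most $1/2$), followed by a union bound over the at most $\binom{d}{s'}p^{s'}=2^{o(n)}$ sparse vectors, using that $p=e^{\mathrm{polylog}(n)}$ and $d=n^{O(1)}$. Your split into a choice of support $S$ and a vector on $S$ is just the paper's count written out, and your explicit conditioning argument fills in what the paper calls trivial.
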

\begin{proof}
Observe that for any $\boldsymbol{a}\in \mathbb{F}_p^d\setminus\{\boldsymbol{0}\}$ we trivially have that $\Pr[M_1\boldsymbol{a}=\boldsymbol{0}]\leq 2^{-n_1}=2^{-\Theta(n)}$.
  Since there are at most 
  $$\binom{d}{n/\log^4n}p^{n/\log^4n}\leq \left(\frac{edp\log^4 n}{n}\right)^{n/\log^4n}=2^{o(n)}
    $$
  $n/\log^4n$-sparse vectors $\boldsymbol{a}\in \mathbb{F}_p^d$, by a simple union bound we obtain that the the probability for such an $\boldsymbol{a}$ to satisfy $M_1\boldsymbol{a}=\boldsymbol{0}$ is $o(1)$. This completes the proof.
\end{proof}

In particular, by combining the above lemma with the Erd\H{o}s-Littlewood-Offord inequality \cite{Erd45}, we conclude that if $\boldsymbol{a}\in \mathbb{F}_p^d$ is $(1-\delta)n$-sparse and $M_1\mbf{a}=\mbf{0}$, then $\rho_{\mathbb{F}_p}(\mbf{a})=O(\log^2 n/n^{1/2})$. However, to prove Proposition \ref{prop:small-atom-probability}, we need a stronger estimate.  

The following lemma asserts that every subset of $s$ columns in $M_1$ has large rank. It will be crucial in Phase 2. 

\begin{lemma} \label{lem:large-rank}
Let $t = \omega(\log n)$. Then, with high probability $M_1=M_{n_1,d}$ is such that every subset of $s$ columns contains at least $s-t$ linearly independent columns.
\end{lemma}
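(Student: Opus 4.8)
The plan is to prove the contrapositive via a union bound: if some $s$-subset of columns has rank $\leq s-t$, then there exist $t$ columns among them that lie in the span of the remaining $s-t$ columns, equivalently there are $t+1$ "problematic" columns exhibiting a deficiency. More concretely, I would argue as follows. Suppose a set $S$ of $s$ columns has rank at most $s-t$. Then we can greedily extract a subset $T \subseteq S$ with $|T| = t$ such that each column in $T$ lies in the linear span (over $\mathbb{F}_p$) of the columns in $S \setminus T$; in fact it suffices to find, for each of $t$ indices, a nontrivial linear dependence, and by a standard argument one can find $t$ columns $\mbf{v}_{i_1},\dots,\mbf{v}_{i_t}$ and pairwise-disjoint... — actually the cleanest route is: low rank means there is a set $W \subseteq S$ with $|W| = s-t$ spanning all of $S$, so the $t$ columns in $S \setminus W$ each lie in $\Span(W)$. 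The span of any $s-t \leq s = (1-\delta)n \leq (1-\delta)n_1/(1-\beta)$ columns is a subspace of $\mathbb{F}_p^{n_1}$ of dimension at most $s-t$, hence contains at most $p^{s-t}$ vectors.

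The key probabilistic step: condition on the $\leq s-t$ columns spanning $W$ (there are at most $\binom{d}{s-t}$ choices, and for each the spanned subspace $V$ has $|V| \leq p^{s-t}$). For a fixed column index $j \notin W$, the column $\mbf{v}_j$ is uniform in $\{\pm 1\}^{n_1}$ and independent of $W$, so $\Pr[\mbf{v}_j \in V] \leq |V \cap \{\pm1\}^{n_1}|/2^{n_1}$. Now I need that an affine/linear subspace of dimension $s-t = (1-\delta)n - t$ in $\mathbb{F}_p^{n_1}$ with $n_1 = (1-\beta)n$ contains few $\pm 1$ points; since $n_1 - (s-t) = (1-\beta)n - (1-\delta)n + t = (\delta - \beta)n + t \geq (\delta/2) n$ for large $n$, the codimension is linear in $n$. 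A subspace of codimension $r$ over $\mathbb{F}_p$ is the kernel of a surjective linear map to $\mathbb{F}_p^r$; but bounding $\pm 1$ points in it requires care — the crude bound $|V \cap \{\pm1\}^{n_1}| \leq 2^{n_1 - r}$ is false in general over $\mathbb{F}_p$ (a single linear equation need not halve the cube). The right tool is an Odlyzko-type / Halász-type bound, or simply: $V$ being the kernel of $r$ independent linear functionals $\ell_1,\dots,\ell_r$, we can WLOG assume each $\ell_i$ has a coordinate with a unit coefficient we can "solve for," reducing to at most $2^{n_1 - r}$ points after fixing the free coordinates — this does work if we pick the functionals to be in reduced row echelon form, so $|V \cap \{\pm 1\}^{n_1}| \leq 2^{n_1 - r}$. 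Thus $\Pr[\mbf{v}_j \in V] \leq 2^{-r} \leq 2^{-(\delta/2)n}$.

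Now take a union bound over the choice of $W$ (at most $\binom{d}{s-t} \leq d^{s} \leq d^n = 2^{O(n \log n)}$ since $\log d = O(\log n)$), over the choice of which $t$ further columns are in $\Span(W)$ (at most $\binom{d}{t} \leq d^t$), and multiply by the probability $(2^{-(\delta/2)n})^t$ that all $t$ of those columns land in $V$ simultaneously (they are mutually independent given $W$). This gives a bound of roughly $2^{O(n\log n)} \cdot 2^{-(\delta/2) n t}$, which is $o(1)$ precisely when $nt \gg n \log n$, i.e. $t = \omega(\log n)$ — matching the hypothesis. The main obstacle, and the step requiring the most care, is the counting bound on $\pm1$ points in a low-dimensional $\mathbb{F}_p$-subspace: one must set it up so that the codimension genuinely buys a factor $2^{-\Omega(n)}$ per extra constrained column (handled via echelon form / solving for pivot coordinates), and one must check that the overhead from the two binomial coefficients is only $2^{O(n\log n)}$, which uses $p = e^{O(\log^3 n)}$ contributing $p^{O(n)} = 2^{O(n\log^3 n)}$ — wait, this is $2^{\omega(n\log n)}$, so one must instead avoid the factor $p^{s-t}$ by bounding $\Pr[\mbf{v}_j \in V]$ directly as above (where $p$ never enters the final estimate) rather than union-bounding over all subspaces; organizing the argument to sidestep the $p^{\Theta(n)}$ term is the real subtlety.
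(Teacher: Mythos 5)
Your proposal is correct and takes essentially the same route as the paper: union-bound over a spanning set of at most $s-t$ columns, bound the number of $\{\pm1\}$ vectors in its span by $2^{\dim}$ (Odlyzko-type, via pivot coordinates), and use independence of the remaining $t$ columns to get a factor $\bigl(2^{-\Omega(n)}\bigr)^t$ that beats the $2^{O(n\log n)}$ union bound precisely when $t=\omega(\log n)$. One minor remark: the bound $|V\cap\{\pm1\}^{n_1}|\le 2^{n_1-r}=2^{\dim V}$ that you briefly doubted is in fact always valid for a subspace over $\mathbb{F}_p$ — your echelon-form/pivot argument proves it, and it is exactly the ``full-rank $r\times r$ sub-block'' bound the paper uses.
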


\begin{proof}
Consider the event that one such subset has rank at most $s-t$. There are $\binom{d}{s}\le d^s\le n^n$ possible choices of columns. For each such choice, there are at most $2^s\le 2^n$ ways to choose a spanning set of $r\le s-t$ columns. Such a subset has span containing at most $2^s$ many $\{\pm1\}$ vectors (indeed, consider a full-rank $r\times r$ sub-block; any $\{\pm1\}$ vector in the span of the columns is determined by its value on these $r$ coordinates), so the probability that the remaining at least $t = \omega(\log n)$ columns are in the span is at most $(2^s/2^{n_1})^t\le(2^{-(\delta-\beta)n})^t = o(n^{-n})$. Taking a union bound, the result follows.
\end{proof}

Next, we state a version of Hal\'asz's inequality (\cite[Theorem~3]{Hal77}) as well as a ``counting inverse Littlewood-Offord theorem'' as was developed in \cite{FJLS21}.
\begin{definition}
Let $\bs{a}\in\mb{F}_p^n$ and $k\in\mb{N}$. We define $R_k^{\ast}(\bs{a})$ to be the number of solutions to 
\[\pm a_{i_1}\pm a_2\pm \ldots\pm a_{i_{2k}}\equiv 0 \mod p\]
with $|\{i_1,\ldots,i_{2k}\}|>1.01k$.
\end{definition}
\begin{theorem}[{\cite[Theorem~1.4]{FJLS21}}]\label{thm:halasz-fp}
Given an odd prime $p$, integer $n$, and vector $\bs{a}=(a_1,\ldots, a_n) \in\mb{F}_p^{n}\setminus\{\bs{0}\}$, suppose that an integer $0\le k \le n/2$ and positive real $L$ satisfy $30L \le |\supp{(\bs{a})}|$ and $80kL \le n$. Then
\[\rho_{\mb{F}_p}(\bs{a})\le\frac{1}{p}+C_{\ref{thm:halasz-fp}}\frac{R_k^\ast(\bs{a}) + ((40k)^{0.99}n^{1.01})^k}{2^{2k} n^{2k} L^{1/2}} + e^{-L}. \]
\end{theorem}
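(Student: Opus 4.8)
The plan is to run the classical Hal\'asz scheme: Fourier inversion, a Gaussian-type pointwise estimate for the resulting character sum, a truncation yielding the $e^{-L}$ term, a Cauchy--Davenport argument producing the $L^{-1/2}$ saving, and finally a tensor-power ($(2k)$-th moment) computation that converts everything into $R_k^\ast(\bs a)$. First, Fourier inversion over $\mb{F}_p$ and $\Ex[\exp(2\pi i t a_i\xi_i/p)]=\cos(2\pi t a_i/p)$ give, uniformly in $x$, $\P[\sum_i a_i\xi_i=x]\le\frac1p\sum_{t\in\mb{F}_p}\prod_{i=1}^n|\cos(2\pi ta_i/p)|$, so $\rho_{\mb{F}_p}(\bs a)\le\frac1p+\frac1p\sum_{t\ne0}\prod_i|\cos(2\pi ta_i/p)|$. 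Using $|\cos(\pi y)|\le e^{-2\|y\|^2}$ (with $\|\cdot\|$ the distance to $\Z$) and that $t\mapsto 2t$ is a bijection of $\mb{F}_p$, the last sum is at most $\sum_{t\ne0}e^{-2\sigma(t)}$ where $\sigma(t):=\sum_{i=1}^n\|ta_i/p\|^2$; moreover $\|y\|^2\le\sin^2(\pi y)\le\pi^2\|y\|^2$ yields $S(t):=\sum_i\cos(2\pi ta_i/p)=n-2\sum_i\sin^2(\pi ta_i/p)\ge n-2\pi^2\sigma(t)$. The $t$ with $\sigma(t)>L$ contribute at most $pe^{-2L}$ to $\sum_{t\ne0}e^{-2\sigma(t)}$, i.e.\ at most $e^{-L}$ to $\rho_{\mb{F}_p}(\bs a)$, so it remains to bound $\sum_{0<\sigma(t)\le L}e^{-2\sigma(t)}$ via the ``almost-period'' sets $B_\tau:=\{t:\sigma(t)\le\tau\}$, $\tau\le L$.

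Two facts about $B_\tau$ are needed. \emph{(i) Scaling:} $d(t):=\sigma(t)^{1/2}$ is subadditive (since $\|(t_1+t_2)a_i/p\|\le\|t_1a_i/p\|+\|t_2a_i/p\|$ coordinatewise, then Minkowski), so the $m$-fold sumset obeys $B_\tau+\dots+B_\tau\subseteq B_{m^2\tau}$; taking $m=\lfloor\sqrt{L/\tau}\rfloor$ and applying iterated Cauchy--Davenport gives $|B_L|\ge\min(p,\,m|B_\tau|-(m-1))$. The hypothesis $30L\le|\supp(\bs a)|$ forces $|B_L|<p$, because the $t$-average $\frac1p\sum_t\sigma(t)=|\supp(\bs a)|\cdot\frac1p\sum_{u\in\mb{F}_p}\|u/p\|^2\asymp|\supp(\bs a)|$ exceeds $L$, so some $t\notin B_L$; hence $|B_\tau|\le 1+2|B_L|\sqrt{\tau/L}$ for all $\tau\le L$. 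Feeding this into a layer-cake identity,
\[\sum_{0<\sigma(t)\le L}e^{-2\sigma(t)}=\int_0^1\#\{t:0<\sigma(t)<\min(L,-\tfrac12\ln s)\}\,ds\le\int_0^1 2|B_L|\sqrt{\tfrac{-\ln s}{2L}}\,ds+pe^{-2L}\lesssim\frac{|B_L|}{\sqrt L}+pe^{-2L},\]
using $\int_0^1\sqrt{-\ln s}\,ds=\Gamma(3/2)<\infty$; the $\sqrt L$ appears because this integral is governed by the scale $\tau\asymp 1$, not $\tau\asymp L$. \emph{(ii) Size:} on $B_L$ one has $S(t)\ge n-2\pi^2L$, and since $80kL\le n$, $(n-2\pi^2L)^{2k}=n^{2k}(1-2\pi^2L/n)^{2k}\ge c\,n^{2k}$ with $c$ \emph{absolute} (the $k$ in the exponent is cancelled by $L\le n/(80k)$), so $|B_L|\le(n-2\pi^2L)^{-2k}\sum_t S(t)^{2k}\lesssim n^{-2k}\sum_t S(t)^{2k}$.

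The remaining input is the tensor-power evaluation of $\sum_{t\in\mb{F}_p}S(t)^{2k}$: writing $S(t)=\tfrac12\sum_{i=1}^n\sum_{\epsilon\in\{\pm1\}}\exp(2\pi i t\epsilon a_i/p)$, expanding the $(2k)$-th power, and using $\sum_{t\in\mb{F}_p}\exp(2\pi i tM/p)=p\,\mbm{1}[M\equiv 0]$, one obtains $\sum_t S(t)^{2k}=\tfrac p{2^{2k}}\mathcal R$, where $\mathcal R=\#\{(i_m,\epsilon_m)_{m=1}^{2k}\in([n]\times\{\pm1\})^{2k}:\sum_m\epsilon_m a_{i_m}\equiv 0\bmod p\}$. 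Splitting $\mathcal R=R_k^\ast(\bs a)+\mathcal R_{\mathrm{deg}}$ by whether $|\{i_1,\dots,i_{2k}\}|>1.01k$ or $\le 1.01k$, and bounding $\mathcal R_{\mathrm{deg}}$ by first choosing the $\le1.01k$ distinct index values (at most $(1.01k)\binom n{\lfloor 1.01k\rfloor}$ ways), then distributing the $2k$ positions among them ($(1.01k)^{2k}$ ways), then choosing the $2k$ signs ($2^{2k}$ ways), one gets $\mathcal R_{\mathrm{deg}}\le\big((40k)^{0.99}n^{1.01}\big)^k$ --- this reduces to checking $4e^{1.01}\le(40/1.01)^{0.99}$, which holds with room. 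Combining everything, $\rho_{\mb{F}_p}(\bs a)\le\frac1p+\frac1p\big(C'|B_L|/\sqrt L+pe^{-2L}\big)\lesssim\frac1p+\frac{C'\mathcal R}{2^{2k}n^{2k}\sqrt L}+e^{-2L}$, which is the asserted bound after renaming constants and writing $\mathcal R=R_k^\ast(\bs a)+\mathcal R_{\mathrm{deg}}$.

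The crux is the scaling estimate in (i). The naive alternative --- bound $|B_L|$ by the moment method and then use $e^{-2\sigma(t)}\le1$ on $B_L$ --- loses a factor of order $\sqrt{n/(kL)}$ and would only give $\rho_{\mb{F}_p}(\bs a)\lesssim R_k^\ast(\bs a)/(2^{2k}n^{2k})+e^{-L}$, which is weaker than Erd\H{o}s--Littlewood--Offord for generic $\bs a$; recovering the $L^{-1/2}$ genuinely requires exploiting the approximate-subgroup structure of $B_\tau$ via Cauchy--Davenport. With this in hand the roles of the hypotheses are transparent: $30L\le|\supp(\bs a)|$ keeps $|B_L|<p$ so that Cauchy--Davenport applies, and $80kL\le n$ keeps the constant in the moment bound independent of $k$; isolating the degenerate tuples to land on the clean error term $\big((40k)^{0.99}n^{1.01}\big)^k$ is then routine bookkeeping.
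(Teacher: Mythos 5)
This theorem is quoted from \cite[Theorem~1.4]{FJLS21} and the present paper gives no proof of it, so the only meaningful comparison is with the cited source: your argument --- Esseen/Fourier inversion, the level sets $B_\tau$ of $t\mapsto\sum_i\|ta_i/p\|^2$ with the subadditivity-plus-Cauchy--Davenport bound $|B_\tau|\le 1+2|B_L|\sqrt{\tau/L}$ (the hypothesis $30L\le|\supp(\boldsymbol{a})|$ serving exactly to force $|B_L|<p$), the layer-cake step producing the $L^{-1/2}$ saving, and the $2k$-th moment identity $\sum_t S(t)^{2k}=\tfrac{p}{2^{2k}}\mathcal{R}$ with the split into $R_k^\ast(\boldsymbol{a})$ and degenerate tuples (the hypothesis $80kL\le n$ keeping the constant absolute) --- is precisely the Hal\'asz-over-$\mathbb{F}_p$ proof used there. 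The details you give check out, including the bookkeeping bound on tuples with at most $1.01k$ distinct indices, so this is a correct reconstruction along essentially the same lines as the cited proof.
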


We denote $\mbf{b} \subset \mbf{a}$  if $\mbf{b}$ is a subvector of $\mbf{a}$ and let $| \mbf{b}|$ be the size of the support of a vector $\mbf{b}$.
\begin{theorem}[{\cite[Theorem~1.7]{FJLS21}}]
  \label{thm:counting}
  Let $p$ be a prime, let $k, n \in \mathbb{N}$, $s\in [n]$ and $t\in [p]$. Define $\Bad_{k,m,\geq t}(s,d)$ as the following set:
  \begin{align*}
     \left\{\boldsymbol{a} \in \mathbb{F}_{p}^{d} : |\mbf{a}|\leq s, \textrm{ and }R_k^{\ast}(\boldsymbol{b})\geq t\cdot \frac{2^{2k} \cdot |\boldsymbol{b}|^{2k}}{p} \text{ for every } \boldsymbol{b}\subseteq \boldsymbol{a} \text{ with } |\boldsymbol{b}|\geq m\right\},
  \end{align*}
 
  We have
  \[
    |\Bad_{k,m,\geq t}(s,d)| \leq \binom{d}{s}\left(\frac{m}{s}\right)^{2k-1} (1.01t)^{m-s}p^{s}.
  \]
\end{theorem}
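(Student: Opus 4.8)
\medskip
\noindent\emph{Proof strategy.} The plan is to prove this by an encoding (iterative compression) argument, in the spirit of counting inverse Littlewood--Offord results. Write $\theta(r) := t\cdot 2^{2k}r^{2k}/p$, so that by definition of $\Bad_{k,m,\ge t}$ every $\boldsymbol{a}$ in the set has $R_k^{\ast}(\boldsymbol{b})\ge\theta(|\boldsymbol{b}|)$ for all $\boldsymbol{b}\subseteq\boldsymbol{a}$ with $|\boldsymbol{b}|\ge m$; in particular, if $|\boldsymbol{a}| = r \ge m$ then $R_k^{\ast}(\boldsymbol{a})\ge\theta(r)$ for the restriction of $\boldsymbol{a}$ to its support. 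First I would reduce to the case $|\boldsymbol{a}| = s$: a vector with support of size $s' < s$ lies in $\Bad_{k,m,\ge t}(s',d)$, and the claimed bound passes through a short comparison (or one simply enlarges the support). So it suffices to fix a support set $S$ with $|S| = s$ --- there are $\binom{d}{s}$ choices --- and to bound the number $N(S)$ of $\boldsymbol{a}\in\Bad_{k,m,\ge t}(s,d)$ with $\supp(\boldsymbol{a}) = S$ by $(m/s)^{2k-1}(1.01t)^{m-s}p^s$; summing over $S$ then gives the theorem.

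To estimate $N(S)$ I would peel off one support coordinate at a time, from size $s$ down to size $m$. The engine is a one-step compression claim: if $\supp(\boldsymbol{a}) = T$ with $m \le |T| = r$ and $R_k^{\ast}(\boldsymbol{a}|_T)\ge\theta(r)$, then some coordinate $i^{\ast}\in T$ has the feature that, once the values of $\boldsymbol{a}$ on $T\setminus\{i^{\ast}\}$ are fixed, the value $a_{i^{\ast}}$ is confined to a subset of $\mathbb{F}_p$ of size at most about $\tfrac{p}{1.01t}(\tfrac{r-1}{r})^{2k-1}$. This is where the precise shape of $R_k^{\ast}$, and the constant $1.01$, enters: each solution $\pm a_{i_1}\pm\dots\pm a_{i_{2k}}\equiv 0$ counted by $R_k^{\ast}$ uses $2k$ slots but more than $1.01k$ distinct indices, so a positive constant fraction of its slots carry an index of multiplicity one (hence coefficient $\pm 1$); averaging the $\gtrsim k\,R_k^{\ast}(\boldsymbol{a}|_T)\ge k\,\theta(r)$ incidences (solution, multiplicity-one slot) over the $r$ support coordinates yields an $i^{\ast}$ appearing with multiplicity one in $\gtrsim k\,\theta(r)/r$ of these solutions. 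Each such solution, with $\boldsymbol{a}|_{T\setminus\{i^{\ast}\}}$ fixed, pins $a_{i^{\ast}}$ to a single residue; the number of candidate such solutions is only $\lesssim 2^{2k}(r-1)^{2k-1}$ (a sign for $a_{i^{\ast}}$ together with a signed multiset of $2k-1$ entries drawn from $T\setminus\{i^{\ast}\}$ --- this is where the exponent $2k-1$ comes from), so a Markov-type count shows that at most $\lesssim 2^{2k}(r-1)^{2k-1}/(k\theta(r)/r)\approx \tfrac{p}{t}(\tfrac{r-1}{r})^{2k-1}$ residues $v$ can serve as $a_{i^{\ast}}$, and being careful with the constants (using the full strength of $1.01k$ distinct indices) sharpens $\tfrac{p}{t}$ to $\tfrac{p}{1.01t}$. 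Deleting $i^{\ast}$ leaves a vector supported on $T\setminus\{i^{\ast}\}$, still of size $\ge m$ and still in $\Bad_{k,m,\ge t}$, so the claim iterates.

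Finally I would assemble the bound: starting from $\boldsymbol{a}$ with $\supp(\boldsymbol{a}) = S$, repeatedly apply the compression claim, peeling coordinates until the support has size $m$; at each of the $s - m$ stages the naive factor $p$ for that entry is replaced by $\tfrac{p}{1.01t}(\tfrac{r-1}{r})^{2k-1}$, with $r$ running over $s, s-1,\dots,m+1$, while the $m$ surviving entries are encoded trivially with $\le p^m$ options. Since $\prod_{r = m+1}^{s}(\tfrac{r-1}{r})^{2k-1} = (m/s)^{2k-1}$ and the $s - m$ factors of $(1.01t)^{-1}$ give $(1.01t)^{m-s}$, this yields $N(S)\le (m/s)^{2k-1}(1.01t)^{m-s}p^s$, and multiplying by $\binom{d}{s}$ finishes the proof. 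I expect the main obstacle to be exactly this compression/assembly step: one must organize the peeling so that the cost of recording, at each stage, which coordinate is peeled and which solution witnesses its compressibility is genuinely absorbed into the overhead $(m/s)^{2k-1}$ rather than contributing a factor growing with the current support size --- this is handled by working relative to the fixed support $S$ (so the peeled coordinate is recovered as $S$ minus the support of the already-decoded part) together with a careful choice of the peeled coordinate and its witness --- and one must track the constants closely enough to land on exactly $1.01$. By comparison, the reduction to $|\boldsymbol{a}| = s$ and the base case at support size $m$ are routine.
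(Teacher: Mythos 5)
You should first note that this paper does not actually prove this statement: it is imported verbatim from \cite{FJLS21} (Theorem~1.7 there), so there is no in-paper argument to compare against, and your sketch has to stand on its own. Its core one-step mechanism is the right kind of idea and is essentially sound once phrased as you implicitly do: every solution counted by $R_k^{\ast}$ with $q>1.01k$ distinct indices has at least $2q-2k>0.02k$ indices of multiplicity one, so for a vector $\boldsymbol{a}$ with support $T$, $|T|=r$, and $R_k^{\ast}(\boldsymbol{a}|_T)\ge t\cdot 2^{2k}r^{2k}/p$ some $i^{\ast}\in T$ is a multiplicity-one index in at least $0.02k\,R_k^{\ast}(\boldsymbol{a}|_T)/r$ solutions, and, with $\boldsymbol{a}|_{T\setminus\{i^{\ast}\}}$ fixed, a Markov count over the at most $2k\cdot 2^{2k}(r-1)^{2k-1}$ potential singleton patterns confines $a_{i^{\ast}}$ to $O\big((p/t)\,((r-1)/r)^{2k-1}\big)$ values. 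Two caveats even here: the averaging only yields $0.02k$ singleton incidences per solution, so this route gives a per-step constant like $200\,p/t$ rather than $p/(1.01t)$ (so the stated constant $1.01$ is not reached as sketched), and the reduction of vectors with $|\boldsymbol{a}|<s$ by ``enlarging the support'' is not an operation on a given vector and needs to be replaced by summing the bound over support sizes.

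The genuine gap is exactly the assembly issue you flag and then dismiss. Because $i^{\ast}$ is chosen adaptively (it depends on the whole vector, including the entry being compressed), the decoder must be told which coordinate was peeled at each of the $s-m$ stages; ``$S$ minus the support of the already-decoded part'' identifies the entire set of not-yet-restored coordinates, not the next one, so it resolves nothing. Nor can the peeling order be made canonical: if $\boldsymbol{a}|_{T\setminus\{i\}}$ is highly structured (say all entries equal) and $a_i$ is generic, then $R_k^{\ast}(\boldsymbol{a}|_T)\ge R_k^{\ast}(\boldsymbol{a}|_{T\setminus\{i\}})$ is already above threshold and coordinate $i$ is not confined at all, so a rule like ``always peel the largest remaining index'' fails, while the heavy-singleton coordinate can be unique. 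Recording the peeled coordinate therefore costs a factor up to $r$ at support size $r$, i.e.\ $s!/m!\ge m^{\,s-m}$ in total, whereas the theorem's entire gain over the trivial bound $\binom{d}{s}p^{s}$ is only the factor $(1.01t)^{-(s-m)}$ times $(m/s)^{2k-1}\le 1$. Whenever $t=O(s)$ --- in particular for $t\asymp\sqrt{L}\approx n^{1/2-o(1)}$ with $s=(1-\delta)n$, which is the regime actually used in the proof of \cref{prop:small-atom-probability} --- your version of the bound is weaker than trivial and useless downstream. So the compression step as organized does not prove the stated theorem; closing it requires a genuinely different accounting of the peeled coordinates (this is precisely the delicate point the argument in \cite{FJLS21} is built to handle), not merely working relative to a fixed support $S$.
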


We now are in position to prove \cref{prop:small-atom-probability}. The proof is quite similar to the proofs in \cite{FJ19,FJLS21,FLM21}. 

\begin{proof}[Proof of \cref{prop:small-atom-probability}]
Let $k = \log^3 n$ and  $m = n/\log^4 n, p\approx e^{\log^3 n}$.

First we use \cref{lem:large-support} to rule out vectors $\bs{a}$ with a support of size less than $n/\log^4n$. Next, let (say) $L = n/\log^{10}n$ and let $\sqrt{L}\leq t\leq p$.

Consider a fixed $\bs{a}\in \Bad_{k,m,\geq t}(s,d)\setminus \Bad_{k,m,\geq 2t}(s,d)$ and we wish to bound the probability that $M_1\mbf{a}=\mbf{0}$. By definition, there is a set $S\subseteq\on{supp}(\bs{a})$ of size at least $m$ such that
\begin{equation}\label{eq:Rk-bound}
R_k^\ast(\bs{a}|_S) < 2t\cdot\frac{2^{2k}|S|^{2k}}{p}.
\end{equation}

Since the rows are independent and since $\rho_{\mathbb{F}_p}(\mbf{a})\leq \rho_{\mathbb{F}_p}(\mbf{a}|_S)$, the probability that $M_1\mbf{a}=\mbf{0}$ is at most $\rho_{\mb{F}_p}(\bs{a}|_S)^{n_1}$. Furthermore, by \cref{thm:halasz-fp} and the given conditions, which guarantee $30L\le m\le|\on{supp}(\bs{a}|_S)|$ and $80kL\le m\le|S|$, and by $\sqrt{L}\le t\le p$, we have
\begin{align}
\rho_{\mb{F}_p}(\bs{a}|_S)&\le\frac{1}{p}+C_{\ref{thm:halasz-fp}}\frac{R_k^\ast(\bs{a}|_S) + ((40k)^{0.99}|S|^{1.01})^k}{2^{2k} |S|^{2k} L^{1/2}} + e^{-L}\notag\\
&\le\frac{1}{p} + \frac{2C_{\ref{thm:halasz-fp}}t}{p\sqrt{L}} + \frac{10^k C_{\ref{thm:halasz-fp}}}{L^{1/2}}\bigg(\frac{k}{|S|}\bigg)^{0.99k} + e^{-L}\notag\\
&\le\frac{Ct}{p\sqrt{L}}\label{eq:rho-from-Rk-bound}
\end{align}
for all sufficiently large $n$ by \cref{eq:Rk-bound}.
All in all, taking a union bound over all the possible choices of $\bs{a}$ (\cref{thm:counting}), and using the fact that $s= (1-\delta)n$ and $n_1=(1-\beta)n$ with $\beta=\omega(1/\log n)$, we obtain the bound
\begin{align*}
\binom{d}{s}\left(\frac{m}{s}\right)^{2k-1} (1.01t)^{m-s}p^{s}&\bigg(\frac{Ct}{p\sqrt{L}}\bigg)^{n_1}
\leq \left(\frac{ed}{s}\right)^{s}(1.01t)^{m}\left(\frac{p}{1.01t}\right)^{s}\bigg(\frac{Ct}{p\sqrt{L}}\bigg)^{(1-\beta)n}\\
&\leq \left(\frac{ed}{(1-\delta)n}\right)^{(1-\delta)n}2^{o(n)}\bigg(\frac{1.01 t}{p}\bigg)^{(\delta-\beta) n}\left(\frac{C(\log n)^5}{\sqrt{n}}\right)^{(1-\beta)n}\\
&= o(1/p)
\end{align*}
on the probability $M_1$ has such a kernel vector for sufficiently large $n$. Here we used the bounds $d\leq n^{1+1/(2-2\delta)-\eps}$, $\eps=\omega (\log\log n/\log n)$  and $\beta=o(\eps)$. Union bounding over all possible values of $t$ shows that there is an appropriately small chance of having such a vector for any $t\ge\sqrt{L}$.

Finally, note that $B_{k,m,\ge p}(s,d)$ is empty and thus the above shows that kernel vectors $\mbf{a}$ cannot be in $\mbf{B}_{k,m,\ge\sqrt{L}}(s,d)$. A similar argument as in \cref{eq:Rk-bound} and \cref{eq:rho-from-Rk-bound} shows that
\[\rho_{\mb{F}_p}(\mbf{a})\le\frac{C'}{p},\]
and the result follows.
\end{proof}

\subsection{Phase 2: boosting the rank using \texorpdfstring{$M_2$}{M2}}






Here we show that, conditioned on the the conclusions of \cref{prop:small-atom-probability} and \cref{lem:large-rank}, after exposing $M_2$ with high probability $M=\begin{pmatrix}
M_1\\ M_2 \end{pmatrix}$ is $s$-robust.

To analyze the probability that a given subset of $s$ columns is not of full rank, we will use the following procedure: 

Fix any subset of $s$ columns in $M_1$, and let $C:=(\boldsymbol{c}_1,\dots,\mbf{c}_{s})$ be the submatrix in $M_1$ that consists of those columns. We reveal $M_2$ according to the following steps:
\begin{enumerate}
    \item Let $I\subseteq [s]$ be the largest subset of indices such that the columns $\{\mbf c_i \mid i\in I\}$ are linearly independent. By \cref{lem:large-rank} we have that  $T:=|I|\geq s-t= (1-\delta)n-t$, where $t=\omega(\log n)$. Without loss of generality we may assume that $I:=\{\mbf c_1,\ldots,\mbf c_T\}$ and $T\leq s-1$ (otherwise we have already found $s$ independent columns of $M$). By maximality, we know that $\mbf c_{T+1}$ can be written (uniquely) as a linear combination of $\mbf c_1,\dots, \mbf c_{T}$. That is, there exists a unique combination for which $\sum^{T}_{i=1} x_i \mbf c_i= \mbf c_{T+1}$. In particular, this means that 
    \begin{align*}
    \sum_{i=1}^Tx_i \mbf c_i-\mbf c_{T+1}=0,    
    \end{align*}
    and hence the vector $\boldsymbol{x}=(0,\dots, x_1,\dots,x_{T}, -1, \dots,0)^T\in \mathbb F_{q}^d$ is $(T+1)$-sparse and satisfies $M_1\boldsymbol{x}=0$.
    Since $T+1\leq s$, by \cref{prop:small-atom-probability} we know that $\rho_{\mathbb{F}_p}(\boldsymbol{x})=2^{-\omega(\log^2 n)}$. 
    \item Expose the row vector of dimension $T+1$ from $M_2$ below the matrix $(\mbf c_1,\dots, \mbf c_{T+1})$.  We obtain a matrix of size $(n_1+1)\times (T+1)$. Denote the new row as $(y_1,\dots,y_{T+1})$. 
    \item If the new matrix is of rank $T+1$, then consider this step as a ``success'', expose the entire row and start over from $(1)$. Otherwise, consider this step as a ``failure'' (As we failed to increase the rank) and observe that if $\begin{bmatrix}
    \mbf c_1 &\dots & \mbf c_{T+1}\\
     y_1 &\dots &y_{T+1}
    \end{bmatrix}$ is not of full rank, then we must have \begin{equation}
       x_1y_1+x_2y_2+\ldots-y_{T+1}=0. \nonumber
    \end{equation}
The probability to expose such a vector $y$ is at most $\rho_{\mathbb{F}_p}(\mbf{x})=e^{-\omega(\log^2 n)}$.
    \item All in all, the probability for more than $\beta n-t$ failures is at most $\binom{\beta n}{t}\left(e^{-\omega(\log^2 n)}\right)^{\beta n-t}=e^{-\omega(n\log n)}=o\left(\binom{d}{s}^{-1}\right)$. Therefore, by the union bound we obtain that with high probability $M$ is $s$-robust.  
\end{enumerate}

This completes the proof.

\section{Proof of the upper bound in \texorpdfstring{\cref{thm:random-bound}}{Theorem 1.2}}
We first perform preliminary computations to compute a certain correlation. This boils down to estimating binomial sums. Let $\xi_i,\xi_i'$ be independent Rademacher variables and define
\[\alpha(n,m) = \frac{\mb{P}[\xi_1+\cdots+\xi_n=\xi_1+\cdots+\xi_m+\xi_{m+1}'+\cdots+\xi_n'=0]}{\mb{P}[\xi_1+\cdots+\xi_n=0]^2}.\]
Clearly $\alpha(n,m)\le\alpha(n,n)\le 10\sqrt{n}$ by \cite{Erd45}.
\begin{lemma}\label{lem:rad-sum-correlation-1}
Fix $\lambda > 0$. If $n$ is even and $0\le m\le (1-\eps)n$ we have
\[\alpha(n,m) = 1+O(m/(\eps n)).\]
\end{lemma}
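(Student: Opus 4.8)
The plan is to compute $\alpha(n,m)$ explicitly in terms of central and near-central binomial coefficients, and then estimate the resulting ratio. First I would write the numerator and denominator out: the denominator is $\big(\Pr[\xi_1+\cdots+\xi_n=0]\big)^2 = \big(2^{-n}\binom{n}{n/2}\big)^2$, while the numerator, after conditioning on the common block $\xi_1+\cdots+\xi_m$, becomes
\[
\Pr[\xi_1+\cdots+\xi_n = \xi_1+\cdots+\xi_m+\xi_{m+1}'+\cdots+\xi_n' = 0] = \sum_{j} \Pr[\xi_1+\cdots+\xi_m = j]\,\Pr[\xi_{m+1}+\cdots+\xi_n = -j]^2,
\]
where the two independent ``tail'' blocks of length $n-m$ each must cancel the value $j$ taken by the shared head block. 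Writing $a = m$, $b = n-m$, this is $2^{-m-2b}\sum_j \binom{m}{(m-j)/2}\binom{b}{(b-j)/2}^2$ (with $j$ ranging over the appropriate parity class), so
\[
\alpha(n,m) = \frac{2^{n}}{\binom{n}{n/2}^2}\cdot 2^{-m-2b}\sum_{j}\binom{m}{(m-j)/2}\binom{b}{(b-j)/2}^2 = \frac{2^{2b-n}}{\binom{n}{n/2}^2}\sum_j \binom{m}{(m-j)/2}\binom{b}{(b-j)/2}^2.
\]

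Next I would isolate the main term. The dominant contribution comes from small $|j|$, where $\binom{m}{(m-j)/2}\approx 2^m/\sqrt{m}$ and $\binom{b}{(b-j)/2}\approx 2^b/\sqrt{b}$ up to Gaussian factors; because $b = n-m \ge \eps n$ is a constant fraction of $n$, these Gaussian tails in $j$ have width $\Theta(\sqrt{b}) = \Theta(\sqrt{n})$, comparable to the width $\Theta(\sqrt n)$ of the relevant range, so one genuinely needs the local central limit theorem rather than a crude bound. The cleanest route is to approximate each binomial coefficient by its Gaussian form with multiplicative error $1+O(j^2/(\text{block size})^{3/2}) + O(1/\text{block size})$ valid for $|j| = O(\sqrt{n}\log n)$, truncate the sum at $|j| \le \sqrt{n}\log n$ (the tail beyond being $n^{-\omega(1)}$ small by Hoeffding, which is negligible against the $1+O(m/(\eps n))$ target since $m/(\eps n)$ is at worst a constant — so actually for the error term to be meaningful we only care when $m = o(n)$, and then one still truncates appropriately), and evaluate the resulting Gaussian sum by an integral comparison. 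Carrying this out, $\sum_j \binom{m}{(m-j)/2}\binom{b}{(b-j)/2}^2 \approx \frac{2^{m+2b}}{\sqrt m\, b}\sum_j \exp(-j^2/(2m) - j^2/b)\cdot\frac{1}{\pi\sqrt 2\cdots}$, and the Gaussian sum over $j$ in a fixed parity class evaluates to $\tfrac12\sqrt{2\pi/(1/m + 2/b)}$ up to lower-order terms; combining with $\binom{n}{n/2}^2 \approx 2^{2n}/(n\pi/2)$ and simplifying should give $\alpha(n,m) = \sqrt{\frac{n/m + \text{stuff}}{\ldots}}$ which, after the algebra, collapses to $1 + O(m/(\eps n))$ — the $\eps$ entering through the denominator $b = n-m \ge \eps n$.

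The main obstacle is bookkeeping the error terms through the local central limit approximation of three binomial coefficients simultaneously and showing that the parity/truncation issues and the sub-leading terms in Stirling's formula all accumulate to exactly $O(m/(\eps n))$ and no worse. The key points to watch: (i) the Gaussian approximation error for $\binom{b}{(b-j)/2}$ is only small relative to its peak when $b$ is large, which is exactly where the hypothesis $m \le (1-\eps)n$ (so $b \ge \eps n$) is used; (ii) one should expand $\exp(-j^2/(2m))$ — the ``head block'' factor — only to the extent needed, since if $m$ is tiny this factor is essentially $1$ and the head block contributes a trivial $\sqrt m$-free factor, whereas if $m = \Theta(n)$ it genuinely narrows the Gaussian and produces the $O(m/n)$ deviation; (iii) the constant $10$ in $\alpha(n,n) \le 10\sqrt n$ from Erdős is the safety net for the regime where $m$ is close to $n$ and the lemma's estimate is only claimed up to the (then useless) $O(m/(\eps n))$, so one does not need the approximation to remain valid there. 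I would structure the write-up as: exact formula for $\alpha(n,m)$; LCLT lemma with explicit error for each binomial; truncation of the $j$-sum; Gaussian integral evaluation; and final simplification.
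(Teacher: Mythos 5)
Your route --- exact decomposition over the value $j$ of the shared block, then local-CLT approximation of all three binomial factors and evaluation of a Gaussian sum --- is genuinely different from the paper's proof and much heavier than what is needed. The paper only needs (and only proves) an upper bound: in the sum $\sum_j \P[\xi_1+\cdots+\xi_m=j]\,\P[\xi_{m+1}+\cdots+\xi_n=-j]^2$ it replaces \emph{one} of the two tail factors by $\sup_k\P[\xi_1+\cdots+\xi_{n-m}=k]$, after which the remaining sum collapses exactly to $\P[\xi_1+\cdots+\xi_n=0]$, so that
\[\alpha(n,m)\le\frac{2^{-(n-m)}\binom{n-m}{\lfloor(n-m)/2\rfloor}}{2^{-n}\binom{n}{n/2}}=1+O\big(m/(n-m)\big)=1+O\big(m/(\eps n)\big),\]
a two-line argument with no LCLT, no truncation, and no error bookkeeping (the matching lower bound $\alpha\ge1$ is immediate from Cauchy--Schwarz, and in any case only the upper bound is used in the variance estimate). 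Your approach buys more --- it identifies the true asymptotic $\alpha(n,m)\approx n/\sqrt{n^2-m^2}$, which is essentially what the paper's more refined Lemma for $m\le n^{1/2}$ extracts by a direct binomial computation --- but at the cost of exactly the delicate bookkeeping you flag as the main obstacle.

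Two concrete issues in your sketch. First, your displayed Gaussian-sum formula carries the $1/\sqrt{m}$ prefactor coming from an LCLT approximation of the length-$m$ head block; that approximation has relative error $\Theta(1/m)$, which exceeds the claimed error $O(m/(\eps n))$ whenever $m=o(\sqrt{\eps n})$, so as written the argument does not give the stated uniform bound for small $m$. The fix is to keep the head block exact and only expand the two tail factors, reducing the sum to $\Ex\big[e^{-S_m^2/(n-m)}\big]=1-m/(n-m)+O(m^2/(n-m)^2)$, and to check that the $\Theta(1/(n-m))$ Stirling corrections cancel against the $\Theta(1/n)$ corrections in the denominator up to $O(m/(\eps n^2))$ --- doable, but this is precisely the careful work your plan defers. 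Second, your remark (ii) is backwards: when $m$ is tiny it is the head weight $\P[S_m=j]$ that confines $j$ to $O(\sqrt m)$, and it is the \emph{tail} factor $e^{-j^2/(n-m)}$ that is essentially $1$ on that range; the factor $e^{-j^2/(2m)}$ is certainly not close to $1$ over $|j|\le\sqrt n\log n$. Neither issue is fatal to the strategy, but both would need to be repaired, and the paper's sup-bound trick avoids the entire enterprise.
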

\begin{proof}
We have
\[\alpha(n,m)\le\frac{\sup_k\mb{P}[\xi_1+\cdots+\xi_{n-m}=k]}{\mb{P}[\xi_1+\cdots+\xi_n=0]^2}\le\frac{2^{-(n-m)}\binom{n-m}{\lfloor(n-m)/2\rfloor}}{2^{-n}\binom{n}{n/2}}= 1 + O(m/(n-m)).\qedhere\]
\end{proof}
We will also need a more refined bound when $m$ is small.
\begin{lemma}\label{lem:rad-sum-correlation-2}
If $n$ is even and $0\le m\le n^{1/2}$, we have
\[\alpha(n,m) = 1+O(m^2/n^2).\]
\end{lemma}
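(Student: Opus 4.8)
The plan is to prove Lemma~\ref{lem:rad-sum-correlation-2} by an explicit expansion of the numerator of $\alpha(n,m)$ as a binomial sum and a careful second-order Taylor estimate. First I would write the numerator exactly: conditioning on the common prefix $\xi_1+\cdots+\xi_m=j$ for $j\in\{-m,\dots,m\}$ with $j\equiv m\pmod 2$, independence of the two suffixes gives
\[
\mathbb{P}[\xi_1+\cdots+\xi_n=0=\xi_1+\cdots+\xi_m+\xi_{m+1}'+\cdots+\xi_n']
= \sum_{j} \mathbb{P}[\xi_1+\cdots+\xi_m=j]\,\mathbb{P}[\xi_{m+1}+\cdots+\xi_n=-j]^2,
\]
since the two length-$(n-m)$ suffixes each independently need to sum to $-j$. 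Thus $\alpha(n,m) = \sum_j \binom{m}{(m+j)/2}2^{-m}\,\big(\binom{n-m}{(n-m-j)/2}2^{-(n-m)}/(\binom{n}{n/2}2^{-n})\big)^2$. The key point is that for $m\le n^{1/2}$ only $|j|\le m\le n^{1/2}$ contributes, a range in which the ratio $\binom{n-m}{(n-m-j)/2}/\binom{n}{n/2}$ is very close to $2^{-m/2}$ times a slowly varying factor.

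Next I would obtain a uniform estimate, for $|j|\le m$, of the ratio $r(j):=\binom{n-m}{(n-m-j)/2}\big/\binom{n}{n/2}$. Writing $\binom{n-m}{(n-m-j)/2} = \binom{n}{n/2}\cdot\frac{(n/2)!\,(n/2)!}{((n-m-j)/2)!\,((n-m+j)/2)!}\cdot\frac{(n-m)!}{n!}$ and expanding the resulting products of the form $\prod (1 - \ell/(n/2))$ (for $\ell$ ranging over $O(m)$ terms each of size $O(m)$), I get $r(j) = 2^{-m/2}\exp\!\big(-j^2/(2n) + O(m^2/n)\big)$ — indeed the $j$-dependent part of $\log r(j)$ is $-j^2/(2n)+O(m^3/n^2)$ and the $j$-independent part is $-\tfrac m2\log 2 + O(m^2/n)$, using standard $\log(1-x)=-x+O(x^2)$ estimates and that all the $\ell/(n/2)$ are $O(m/n)=O(n^{-1/2})$. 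Consequently
\[
\alpha(n,m) = \sum_{j}\binom{m}{(m+j)/2}2^{-m}\,2^{-m}\exp\!\big(-j^2/n + O(m^2/n)\big) = \Big(1+O(m^2/n)\Big)\sum_j\binom{m}{(m+j)/2}2^{-m}\,e^{-j^2/n}\cdot 2^{-m}\cdot 2^{\,m}.
\]
Here I must be careful: the factor $2^{-2m}$ from $r(j)^2$ cancels against a $2^{2m}\cdot(\text{something})$; a cleaner route is to note $\sum_j \binom{m}{(m+j)/2}2^{-m} = 1$ and $\sum_j \binom{m}{(m+j)/2}2^{-m} j^2 = m$ (variance of a Rademacher sum), so $\sum_j\binom{m}{(m+j)/2}2^{-m}e^{-j^2/n} = 1 - m/n + O(\mathbb{E}[j^4]/n^2) = 1+O(m/n)$ — but we want $O(m^2/n^2)$, which is smaller, so this cruder bound is not enough and I need the cancellation to be genuine.

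The main obstacle, therefore, is extracting the stronger error term $O(m^2/n^2)$ rather than $O(m/n)$: the first-order term $-j^2/(2n)$ in $\log r(j)$ produces, after squaring and summing against the binomial weight, a contribution of order $\mathbb{E}_j[j^2]/n = m/n$, which must be cancelled by a matching term hidden in the $j$-independent normalization. The resolution is that $\alpha(n,m)$ is exactly a ratio of probabilities, and one should instead compare it directly to $\alpha(n,0)=1$ by a telescoping/comparison argument: view the suffix length $n-m$ and use the identity $\mathbb{P}[\mathrm{Bin\ type\ sum\ over\ }n-m = -j]/\mathbb{P}[\cdots n = 0]$ together with the fact (from Lemma~\ref{lem:rad-sum-correlation-1}'s proof) that shortening by one coordinate changes the relevant local probabilities multiplicatively by $1+O(1/n)$ with the linear-in-$j$ part averaging out by symmetry $j\leftrightarrow -j$. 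Concretely, I would pair the $j$ and $-j$ terms, so that odd powers of $j$ vanish and the leading surviving correction is genuinely quadratic in $j^2/n$ and in $m/n$, giving $\alpha(n,m)=1+O(m^2/n^2)$ after using $\sum_j \binom{m}{(m+j)/2}2^{-m} j^4 = O(m^2)$. I expect the bookkeeping of which terms are linear versus quadratic in $j$, and verifying the linear ones cancel, to be the only real subtlety; everything else is routine Stirling/Taylor estimation valid because $m \le n^{1/2}$ keeps every small parameter $O(n^{-1/2})$.
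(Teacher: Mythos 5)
You set things up the same way the paper does: condition on $j=\xi_1+\cdots+\xi_m$, write the numerator as $\sum_j\mathbb{P}[S_m=j]\,\mathbb{P}[S_{n-m}=-j]^2$ (the paper's sum over $k=(m+j)/2$), and expand. You also correctly diagnose the crux: a first-order estimate only gives $1+O(m/n)$, and the $\mathbb{E}[j^2]/n$-type contribution from the Gaussian factor has to be cancelled by the $j$-independent normalization. The gap is that the mechanism you propose for this cancellation does not work. Pairing $j$ with $-j$ kills only terms that are odd in $j$, and those vanish automatically because the law of $j$ is symmetric; the terms that actually must cancel --- roughly $-\mathbb{E}[j^2]/(n-m)\approx -m/n$ from averaging $e^{-j^2/(n-m)}$, against $+m/n$ from the squared ratio of central binomial coefficients, which contributes a factor $\approx n/(n-m)$ --- are \emph{even} in $j$, so the $j\leftrightarrow-j$ symmetry buys nothing. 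Verifying that these first-order terms cancel exactly (via $\mathbb{E}[j^2]=m$), with the surviving terms controlled by $\mathbb{E}[j^4]=O(m^2)$ and with \emph{both} the $j$-dependent and $j$-independent factors expanded to relative accuracy $O(m^2/n^2)$, is precisely the second-order bookkeeping the paper carries out through the falling-factorial expansion \cref{eq:stirling-gen}; your write-up never does it, and the precision you state (the $j$-independent part of $\log r(j)$ known only up to $O(m^2/n)$, and the normalization $2^{-m/2}$, which should be of order $2^{-m}\sqrt{n/(n-m)}$) cannot yield anything sharper than $1+O(m/n)$.

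If you want to realize your intuition rigorously without the paper's explicit expansion, note that $\mathbb{P}[S_n=0]=\sum_j\mathbb{P}[S_m=j]\,\mathbb{P}[S_{n-m}=-j]$, so one has the exact identity $\alpha(n,m)=\mathbb{E}_j[f(j)^2]/(\mathbb{E}_j[f(j)])^2=1+\mathrm{Var}_j(f(j))/(\mathbb{E}_j[f(j)])^2$, where $f(j)=\mathbb{P}[S_{n-m}=-j]$ and $j\sim S_m$. The $j$-independent normalization then cancels identically, and it suffices to show $f(j)=c_{n-m}\,e^{-j^2/(2(n-m))}\bigl(1+O(m^2/n^2)\bigr)$ uniformly over the admissible $|j|\le m\le n^{1/2}$ (an elementary product expansion, using $j^4/n^3\le m^2/n^2$), which gives $\mathrm{Var}_j(f(j))=O(m^2/n^2)\cdot c_{n-m}^2$ while $\mathbb{E}_j[f(j)]=\Theta(c_{n-m})$, hence $\alpha(n,m)=1+O(m^2/n^2)$. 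Some argument of this second-order type is unavoidable; as written, your proof has a genuine hole at its decisive step.
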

\begin{proof}
Using the approximation $1-x=\exp(-x-x^2/2+O(x^3))$ for $|x|\le 1/2$ we see that if $y$ is an integer satisfying $1\le y\le x/2$ then
\begin{align}
x(x-1)\cdots(x-y+1) &= x^y\exp\bigg(-\sum_{i=0}^{y-1}\frac{i}{x}-\sum_{i=0}^{y-1}\frac{i^2}{2x^2}+O\Big(\frac{y^4}{x^3}\Big)\bigg)\notag\\
&= x^y\exp\bigg(-\frac{y(y-1)}{2x}-\frac{y(y-1)(2y-1)}{12x^2} + O\Big(\frac{y^4}{x^3}\Big)\bigg).\label{eq:stirling-gen}
\end{align}
We now apply this to the situation at hand. We see $\alpha(n,m)$ is equal to
\begin{align*}
& \frac{2^{-(2n-m)}\sum_{k=0}^m\binom{m}{k}\binom{n-m}{n/2-k}^2}{2^{-2n}\binom{n}{n/2}^2}\\
=& 2^m\sum_{k=0}^m\binom{m}{k}\bigg(\frac{(n/2)(n/2-1)\cdots(n/2-k+1)\times(n/2)(n/2-1)\cdots(n/2-(m-k)+1)}{n(n-1)\cdots(n-m+1)}\bigg)^2\\
=& 2^m\sum_{k=0}^m\binom{m}{k}\bigg(\frac{(n/2)^me^{-\frac{k(k-1)}{n}-\frac{k(k-1)(2k-1)}{3n^2}-\frac{(m-k)(m-k-1)}{n}-\frac{(m-k)(m-k-1)(2m-2k-1)}{3n^2}+O(m^4/n^3)}}{n^me^{-\frac{m(m-1)}{2n}-\frac{m(m-1)(2m-1)}{12n^2}+O(m^4/n^3)}}\bigg)^2\\
=&2^{-m}\sum_{k=0}^m\binom{m}{k}\exp\bigg(-\frac{m^3-4mk(m-k)+n(2k-m)^2-nm}{2n^2}+O(m^2/n^2)\bigg)\\
=& 2^{-m}\sum_{k=0}^m\binom{m}{k}\bigg(1-\frac{m^3-4mk(m-k)-nm}{2n^2}+O(m^2/n^2)\bigg)\bigg(1-\frac{(2k-m)^2}{2n}+O\bigg(\frac{(2k-m)^4}{n^2}\bigg)\bigg)\\
=& 2^{-m}\sum_{k=0}^m\binom{m}{k}\bigg(1-\frac{m^3-4mk(m-k)-nm}{2n^2}\bigg)\bigg(1-\frac{(2k-m)^2}{2n}\bigg)+O(m^2/n^2).
\end{align*}
In the third line, we used \cref{eq:stirling-gen} and in the fourth line, we simplified the expression and used $k\le m\le n^{1/2}$ to subsume many terms into an error of size $O(m^2/n^2)$. The fifth line used $\exp(x)=1+x+O(x^2)$ for $|x|\le 1$ and the sixth line uses $2^{-m}\binom{m}{k}(2k-m)^4\le 2m^2\exp(-(2k-m)^2/100)$. Finally, this sum equals
\[\alpha(n,m) = 1-\frac{3nm^2-3m^3+2m^2}{4n^3}+O(m^2/n^2) = 1+O(m^2/n^2).\qedhere\]
\end{proof}

We are ready to prove the upper bound in \cref{thm:random-bound}.
\begin{proof}[Proof of the upper bound in \cref{thm:random-bound}]
We are given $\delta\in(0,1)$ and $\eps = \omega(\log\log n/\log n)$, with $d = n^{1+1/(2-2\delta)+\eps}$. Let $s = 2\lfloor(1-\delta)n/2\rfloor$. We consider an $n\times d$ random matrix with independent Rademacher entries and wish to show it is not $s$-robust with high probability. We may assume $\eps < 1/2$ as increasing $d$ makes the desired statement strictly easier.

For an $s$-tuple of columns labeled by the index set $S\subseteq[d]$, let $X_S$ be the indicator of the event that these columns sum to the zero vector. Let $X = \sum_{S\in\binom{[d]}{s}}X_S$, and let $(\xi_1,\ldots,\xi_d)$ be a vector of independent Rademachers. We have
\[\mb{E}X = \binom{d}{s}\mb{E}X_{[s]} = \binom{d}{s}\mb{P}[\xi_1+\cdots+\xi_s=0]^n = \binom{d}{s}\bigg(2^{-s}\binom{s}{s/2}\bigg)^n\]
and
\begin{align*}
\on{Var}X &= \mb{E}X^2-(\mb{E}X)^2 = \sum_{S,T\in\binom{[d]}{s}}\Big(\mb{P}\Big[\sum_{i\in S}\xi_i=\sum_{j\in T}\xi_T=0\Big]^n-\mb{P}[\xi_1+\cdots+\xi_s=0]^{2n}\Big)\\
&= (\mb{E}X)^2\cdot\frac{1}{\binom{d}{s}^2}\sum_{S,T\in\binom{[d]}{s}}\Bigg(\frac{\mb{P}\Big[\sum_{i\in S}\xi_i=\sum_{j\in T}\xi_T=0\Big]^n}{\mb{P}[\xi_1+\cdots+\xi_n=0]^{2n}}-1\Bigg)\\
&= (\mb{E}X)^2\sum_{m=0}^s\frac{\binom{s}{m}\binom{d-s}{s-m}}{\binom{d}{s}}\cdot(\alpha(s,m)^n-1).
\end{align*}
For every $\eta>0$ and $m\le c_\eta n^{1/2}$, where $c_\eta$ is a sufficiently small absolute constant in terms of $\eta$, we see $|\alpha(s,m)^n-1|\le\eta$ by \cref{lem:rad-sum-correlation-2}. For $c_\eta n^{1/2} < m\le(1-\eps/8)s$ we have $\alpha(s,m)^n\le\exp(O(m/\eps))$ by \cref{lem:rad-sum-correlation-1}. For this range we have, since $m/s\ge n^{\delta/2}s/d$,
\[\frac{\binom{s}{m}\binom{d-s}{s-m}}{\binom{d}{s}}\le(s+1)\mb{P}[\mr{Bin}(s,s/d)\ge m]\le\exp(-s D(m/(2s)||s/d))\le\exp(-m(\delta/4)\log n)\]
by Chernoff--Hoeffding (the fact that $\mr{Bin}(n,p)$ exceeds $nq$ for $q\ge p$ with probability at most $\exp(-nD(q||p))$, where this is the KL-divergence). Thus
\[\sum_{m=c\sqrt{n}}^{(1-\eps)s}\frac{\binom{s}{m}\binom{d-s}{s-m}}{\binom{d}{s}}\cdot(\alpha(s,m)^n-1)\le\sum_{m=c\sqrt{n}}^{(1-\eps)s}\exp(O(m/\eps))\cdot\exp(-m(\delta/4)\log n) = o(1)\]
as $\eps = \omega(\log\log n/\log n)$.

Finally for $(1-\eps/8)s\le m\le s$ we have
\[\sum_{m=(1-\eps)s}^s\frac{\binom{s}{m}\binom{d-s}{s-m}}{\binom{d}{s}}\cdot(\alpha(s,m)^n-1)\le\sum_{m=(1-\eps)s}^s\frac{\binom{s}{m}\binom{d-s}{s-m}}{\binom{d}{s}}(10\sqrt{n})^n\le 2^{s}\frac{\binom{d}{\eps s/8}}{\binom{d}{s}}(10\sqrt{n})^n.\]
Thus
\[\sum_{m=(1-\eps)s}^s\frac{\binom{s}{m}\binom{d-s}{s-m}}{\binom{d}{s}}\cdot(\alpha(s,m)^n-1)\le\bigg(\frac{10s}{\eps d}\bigg)^{(1-\eps/8)s}(10\sqrt{n})^n\le(n^{-\frac{1}{2-2\delta}-\eps/2})^{(1-\eps/8)(1-\delta)n}(10\sqrt{n})^n,\]
since $d = n^{1+1/(2-2\delta)+\eps}$ and $s = 2\lfloor(1-\delta)n/2\rfloor$ along with $\eps = \omega(\log\log n/\log n)$. We see that this is $o(1)$. Thus
\[\on{Var}X\le(\mb{E}X)^2\cdot\bigg(\eta + o(1) + o(1)\bigg)\le 2\eta (\mb{E}X)^2\]
for $n$ sufficiently large, and thus $X>0$ with probability at least $1-2\eta$.
\end{proof}

\bibliographystyle{abbrv}
\bibliography{main}

\end{document}